\documentclass[11pt]{article}

\usepackage[T1]{fontenc}
\usepackage[utf8]{inputenc}
\usepackage{lmodern}
\usepackage{amsmath,amssymb,amsthm}
\usepackage{hyperref}
\usepackage[a4paper,margin=1in]{geometry}

\usepackage{authblk}

\usepackage{float}
\usepackage{tikz}
\usetikzlibrary{arrows.meta,calc,positioning}

\newcommand{\keywords}[1]{%
  \par\bigskip
  \noindent\textbf{Keywords.} #1\par
}

\newcommand{\subjclass}[2][]{%
  \par\bigskip
  \noindent\textbf{MSC#1.} #2\par
}

\title{\bf An Introduction to Torsors in Mathematics\\
with a View Toward $\Sigma$-Protocols in Cryptography}

\author{\Large Takao Inou\'{e}}
\affil{\large Faculty of Informatics, Yamato University, \\ Osaka, Japan\footnote{Email: inoue.takao@yamato-u.ac.jp; \\ Personal Email: takaoapple@gmail.com \\ [I prefer my personal email address for correspondence.]}}
\date{March 12, 2026}

\newtheorem{definition}{Definition}[section]
\newtheorem{remark}[definition]{Remark}
\newtheorem{example}[definition]{Example}
\newtheorem{proposition}[definition]{Proposition}

\begin{document}

\maketitle

\begin{abstract}
This paper provides a preparatory introduction to torsors,
written with a view toward later applications in the author's work.
Rather than aiming at a comprehensive survey,
the exposition focuses on those aspects of torsors
that are most useful for understanding torsor-based reasoning:
group actions, orbits, free transitive actions,
the absence of a canonically chosen origin,
and the interpretation of group elements as transports between points.
After developing the basic definition and several elementary examples,
we emphasize a central theme:
torsors are not only characterized abstractly by free transitive group actions,
but also arise naturally as objects obtained by gluing local trivial pieces
by means of transition data satisfying cocycle conditions.
A brief optional section indicates a sheaf- and topos-theoretic perspective.
In the final part, we explain how these ideas prepare the ground
for later conceptual applications, including aspects of $\Sigma$-protocols.
\end{abstract}

\keywords{torsor, principal homogeneous space, affine space, group action, local triviality, gluing data, cocycle, sheaf torsor, topos theory, $\Sigma$-protocol}

\subjclass[2020]{20L05, 18F20, 94A60}

\tableofcontents

\section{Introduction}

The notion of a torsor appears in many parts of modern mathematics,
but it is often first encountered only after one has already learned
a substantial amount of algebra, geometry, or cohomology.
As a result, the concept can seem more technical or mysterious
than it really is.
The purpose of these notes is to present torsors in a way
that is elementary enough to be read independently,
while at the same time emphasizing those structural features
that become important in later applications.

At an intuitive level, a torsor is a space with symmetry
but without a chosen origin.
A group has a distinguished identity element,
and so its elements may be viewed in absolute terms.
A torsor, by contrast, carries the same pattern of symmetry
without coming equipped with any preferred basepoint.
What remains meaningful is relative position:
given two points of a torsor,
there is a unique group element that transports one point to the other.
Thus, whereas groups naturally support an absolute language,
torsors are governed by transport, comparison, and displacement.

The basic geometric model is provided by affine spaces.
An affine space resembles a vector space,
but it has no canonically chosen zero vector.
One may subtract two points and obtain a vector,
but one may not intrinsically add two points.
This already exhibits the central torsor phenomenon:
differences are meaningful, but absolute coordinates require an arbitrary choice.
This affine viewpoint is not merely a geometric convenience.
It brings into focus a structural principle that is important far beyond
elementary geometry:
the mathematically significant objects are often not points in isolation,
but differences, displacements, and translations.
In this sense, affine spaces provide the clearest first model
for the general theory of torsors.

This perspective is especially useful in analysis.
Many basic analytic constructions are naturally expressed
in terms of increments such as
\[
f(x+h)-f(x),
\]
where the essential object is the displacement $h$
rather than the absolute position of the point $x$.
From this point of view,
the domain is naturally regarded as an affine space,
while increments belong to the associated vector space.
Thus one distinguishes clearly between \emph{where one is}
and \emph{how one moves}.
This style of thinking is very much in the spirit
of the affine viewpoint in analysis emphasized by Schwartz \cite{Schwartz}.
For the purposes of the present notes,
it also explains why affine spaces are the most natural entry point
to torsors.

The present paper is not intended as a comprehensive survey
of all appearances of torsors in mathematics.
Its aim is more focused.
It is written as preparatory mathematical background
for the sheaf-theoretic and torsor-theoretic viewpoint
developed in the author's later work \cite{Inoue2026},
especially for situations in which torsor-based reasoning appears
in a sheaf-theoretic or local-to-global form.
In such settings,
the main issue is often not the existence
of a globally distinguished point,
but rather the possibility of making compatible local choices
and understanding the obstructions to global ones.
For that reason, these notes place particular emphasis
on local triviality, gluing data, cocycle descriptions,
and the distinction between local sections and global sections.

This point of view is especially important
for the conceptual interpretation of certain protocol-theoretic constructions.
In later work, one considers sheaf-theoretic objects associated with
$\Sigma$-protocols and studies situations in which such objects
behave like torsors under suitable group actions.
From that perspective, local triviality is related to simulation,
while the failure of appropriate global choices
reflects structural security constraints.
The purpose of the present paper is not to establish those later results,
but to supply the mathematical language needed to understand them.
In particular, the reader should come away with a clear understanding of
what a torsor is,
how torsors arise from gluing data,
how sheaf-theoretic torsors differ from set-theoretic ones,
and why the contrast between local and global sections matters.

Accordingly, one of the central themes of the paper is that
torsors should not be understood only through the abstract definition
of a free and transitive group action.
That definition is fundamental and will be developed carefully,
but it tells only one part of the story.
Equally important is the fact that torsors arise naturally
from local trivial pieces that are glued together by transition data.
Locally, a torsor may look indistinguishable from the acting group itself.
Globally, however, these local trivializations need not combine
into a single preferred global one.
The mismatch between local trivializations is recorded
by transition functions satisfying cocycle conditions.
In this way, torsors become among the simplest and most instructive examples
of mathematical objects assembled from compatible local data.

This gluing viewpoint places torsors close to broader themes
in geometry, descent, and cohomology.
One of the reasons torsors are so useful
is that they stand at the meeting point of group actions,
local-to-global reconstruction,
and first cohomological classification.
Even when one does not pursue the full theory,
it is important to see that torsors belong not merely
to elementary group theory,
but to the general mathematics of compatibility and reconstruction.
For this reason, after the basic sections of the paper,
we include a short optional section
indicating a sheaf- and topos-theoretic perspective.
Readers mainly interested in the elementary theory
may safely skip that section on a first reading.

The paper is organized as follows.
Section~2 reviews the language of group actions and orbits.
Section~3 studies free and transitive actions,
which together lead directly to the torsor concept.
Section~4 gives the formal definition of a torsor.
Section~5 develops the main elementary examples,
with particular emphasis on affine spaces.
Section~6 studies basepoints, differences, and transporters,
highlighting the non-canonical character of identifications with groups.
Section~7 contains exercises on the basic theory.
Section~8 introduces local triviality and gluing data.
Section~9 develops cocycle descriptions and reconstruction.
Section~10 turns to sheaf torsors
and clarifies the role of local and global sections.
Section~11 is a logically optional section
offering a sheaf- and topos-theoretic glimpse,
though it is strongly recommended
for readers interested in the later applications.
Section~12 indicates how the preceding material prepares the ground
for later applications to $\Sigma$-protocols.
We conclude in Section~13 with a brief summary.

The appendices provide detailed solutions to the exercises in Section~7,
written in a way that emphasizes the use of the basic definitions,
a suggested lecture plan for using these notes in a classroom setting
or as preparation for a further reading course on the author's later paper,
and a short glossary of basic terms for quick reference.

\section{Group Actions and Orbits}

We begin with the elementary language of group actions.
Although torsors are often introduced as a special kind of group action,
it is useful to proceed in stages.
The notions of orbit, stabilizer, and transport between points
already contain much of the structure that later becomes central.

\begin{definition}
Let $G$ be a group and let $X$ be a set.
A \emph{left action} of $G$ on $X$ is a map
\[
G\times X \to X,\qquad (g,x)\mapsto g\cdot x,
\]
such that
\begin{enumerate}
\item $e\cdot x=x$ for all $x\in X$, where $e$ is the identity element of $G$,
\item $(gh)\cdot x=g\cdot (h\cdot x)$ for all $g,h\in G$ and all $x\in X$.
\end{enumerate}
In this situation, we say that $G$ \emph{acts on} $X$,
or that $X$ is a \emph{$G$-set}.
\end{definition}

\begin{remark}
A right action may be defined similarly.
In these notes we work primarily with left actions,
since they are sufficient for the basic theory
and for the conceptual applications that follow.
\end{remark}

A group action allows one to regard the elements of $G$
as transformations of the set $X$.
In this way, the abstract algebraic structure of the group
becomes visible through the movement of points in $X$.
For the purposes of torsor theory,
the important issue is not merely that points move,
but how they move relative to one another.

\begin{definition}
Let $G$ act on a set $X$, and let $x\in X$.
The \emph{orbit} of $x$ is the subset
\[
G\cdot x:=\{g\cdot x\mid g\in G\}\subseteq X.
\]
\end{definition}

Thus the orbit of a point consists of all points
that can be reached from it by the action of the group.
Orbits decompose the set $X$ into regions of mutual accessibility:
two points lie in the same orbit precisely when one can be transported
to the other by some element of $G$.

\begin{definition}
Let $G$ act on a set $X$, and let $x\in X$.
The \emph{stabilizer} of $x$ is the subgroup
\[
G_x:=\{g\in G\mid g\cdot x=x\}.
\]
\end{definition}

The stabilizer measures the symmetry that remains invisible at the point $x$:
its elements act, but leave $x$ fixed.
From the viewpoint of torsors,
stabilizers will eventually disappear,
because the actions we care about most will have no nontrivial fixed symmetries.

\begin{example}
Let $V$ be a vector space over a field $k$.
Then the additive group $(V,+)$ acts on the underlying set of $V$ by translation:
\[
v\cdot x:=v+x.
\]
For any $x\in V$, the orbit of $x$ is all of $V$,
since every point can be reached from every other by translation.
Moreover, the stabilizer of $x$ is trivial,
because $v+x=x$ implies $v=0$.
This basic example already foreshadows the torsor situation.
\end{example}

\begin{example}
Let $G$ act on itself by left multiplication:
\[
g\cdot x:=gx.
\]
Then the orbit of any point is all of $G$,
and the stabilizer of every point is trivial.
This is the model example of a free and transitive action,
to which we will return in the next section.
\end{example}

The orbit viewpoint is especially useful
because it shifts attention away from the set $X$ as a whole
and toward the relation among its points.
If $x,y\in X$ lie in the same orbit,
there exists at least one element $g\in G$ such that
\[
g\cdot x=y.
\]
In general such an element need not be unique.
The failure of uniqueness is controlled by stabilizers.
Indeed, if $g\cdot x=y$ and $h\cdot x=y$,
then
\[
h^{-1}g\cdot x=x,
\]
so $h^{-1}g\in G_x$.
Thus uniqueness of transport is closely related
to the triviality of stabilizers.

This observation already points toward torsors.
A torsor is, roughly speaking, a space in which every point can be reached
from every other by a unique group element.
In other words, it is a setting in which the group action supplies
a perfectly rigid notion of relative position.
Before making this precise, however,
we isolate the two relevant properties separately:
freedom and transitivity.

\section{Free and Transitive Actions}

We now discuss the two key properties
that distinguish the actions underlying torsors.
Each property has a simple intuitive meaning.
Transitivity says that the action has only one orbit:
every point can be moved to every other point.
Freedom says that no nontrivial group element fixes a point:
the acting group leaves no hidden symmetry at any location.
When both conditions hold simultaneously,
the action behaves as rigidly as possible.

\begin{definition}
Let $G$ act on a set $X$.
\begin{enumerate}
\item The action is called \emph{transitive} if for every $x,y\in X$,
there exists $g\in G$ such that
\[
g\cdot x=y.
\]
Equivalently, $X$ consists of a single orbit.

\item The action is called \emph{free} if for every $x\in X$ and every $g\in G$,
\[
g\cdot x=x \quad \Longrightarrow \quad g=e.
\]
Equivalently, the stabilizer of every point is trivial.
\end{enumerate}
\end{definition}

\begin{remark}
These two conditions are logically independent.
An action may be transitive without being free,
and it may be free without being transitive.
Torsors arise precisely when both hold at once.
\end{remark}

\begin{example}
Let $G$ act on the set of left cosets $G/H$ by left multiplication:
\[
g\cdot (aH):=(ga)H.
\]
This action is transitive,
since any coset can be moved to any other by an appropriate group element.
However, it is not free unless $H=\{e\}$,
because the stabilizer of the coset $H$ is exactly $H$.
\end{example}

\begin{example}
Let $G=\mathbb{Z}$ act on $X=\mathbb{Z}$ by translation:
\[
n\cdot x:=n+x.
\]
This action is free and transitive.
Hence $\mathbb{Z}$, viewed as a set acted on by itself,
already provides an example of the kind of structure
that will later be abstracted into the notion of a torsor.
\end{example}

\begin{proposition}
Let $G$ act on a set $X$.
Then the following are equivalent:
\begin{enumerate}
\item the action is free and transitive;
\item for every $x,y\in X$, there exists a unique $g\in G$ such that
\[
g\cdot x=y.
\]
\end{enumerate}
\end{proposition}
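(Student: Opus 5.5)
The plan is to prove the two implications separately, taking existence and uniqueness of the transporting element in turn. In each direction the only tools needed are the action axioms from the definition of a left action and the computation following the examples in Section~2: if $g\cdot x=y$ and $h\cdot x=y$, then $h^{-1}g\in G_x$.

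For (1)$\Rightarrow$(2), fix $x,y\in X$. Existence of some $g$ with $g\cdot x=y$ is exactly transitivity. For uniqueness, suppose $g\cdot x=y=h\cdot x$. Acting by $h^{-1}$ and using the compatibility axiom together with $e\cdot x=x$ gives $(h^{-1}g)\cdot x=h^{-1}\cdot(g\cdot x)=h^{-1}\cdot(h\cdot x)=x$. This is the stabilizer computation already recorded in Section~2. Freedom then forces $h^{-1}g=e$, so $g=h$.

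For (2)$\Rightarrow$(1), transitivity is immediate, since (2) gives for every pair $x,y$ at least one $g$ with $g\cdot x=y$. For freedom, let $g\cdot x=x$. Applying (2) with $y=x$, there is a unique element sending $x$ to $x$. The identity $e$ is such an element by the axiom $e\cdot x=x$, so uniqueness gives $g=e$. Hence every stabilizer is trivial.

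No step is a genuine obstacle. The one point needing care is the manipulation $h^{-1}\cdot(h\cdot x)=x$ in the uniqueness argument: it must be justified by both action axioms, not treated as cancellation in a group, because $X$ carries no group structure.
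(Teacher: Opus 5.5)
Your proof is correct and follows the paper's argument essentially step for step. Transitivity gives existence, freeness applied to $(h^{-1}g)\cdot x=x$ gives uniqueness, and conversely existence gives transitivity while uniqueness compared against $e\cdot x=x$ gives freeness; your explicit justification of $h^{-1}\cdot(h\cdot x)=x$ from both action axioms is just a slightly more careful version of the paper's one-line step.
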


\begin{proof}
Assume first that the action is free and transitive.
Let $x,y\in X$.
By transitivity, there exists $g\in G$ such that $g\cdot x=y$.
Suppose also that $h\cdot x=y$.
Then
\[
h^{-1}g\cdot x=x.
\]
Since the action is free, we must have $h^{-1}g=e$,
and therefore $g=h$.
Thus the required element is unique.

Conversely, assume that for every $x,y\in X$
there exists a unique $g\in G$ such that $g\cdot x=y$.
Existence immediately implies transitivity.
To prove freeness, let $x\in X$ and suppose that $g\cdot x=x$.
But also $e\cdot x=x$.
By uniqueness, $g=e$.
Hence the action is free.
\end{proof}

This proposition is fundamental.
It shows that a free and transitive action
is exactly the same thing as a system in which
every relative displacement between two points
is encoded by a unique element of the acting group.
For later purposes, it is useful to isolate this group element explicitly.

\begin{definition}
Suppose that $G$ acts freely and transitively on $X$.
For $x,y\in X$, let
\[
\operatorname{tr}(x,y)\in G
\]
denote the unique element such that
\[
\operatorname{tr}(x,y)\cdot x=y.
\]
We call $\operatorname{tr}(x,y)$ the \emph{transporter}
from $x$ to $y$.
\end{definition}

Figure~\ref{fig:transporter-basic} illustrates the basic transporter idea:
the relation between two points is measured by a unique group element.

\begin{figure}[H]
\centering
\begin{tikzpicture}[>=Latex, baseline=(current bounding box.center)]
  \node[circle, fill=black, inner sep=1.5pt, label=below:$x$] (x) at (0,0) {};
  \node[circle, fill=black, inner sep=1.5pt, label=below:$y$] (y) at (4,0) {};
  \draw[->, thick] (x) -- node[above] {$\operatorname{tr}(x,y)$} (y);
\end{tikzpicture}
\caption{The transporter from $x$ to $y$ is the unique group element
sending $x$ to $y$.}
\label{fig:transporter-basic}
\end{figure}
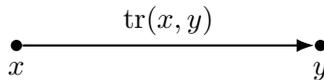

\begin{remark}
The transporter notation is not universal,
but the underlying idea is extremely useful.
In a torsor, the group is recovered not from distinguished points,
but from the system of transports between points.
This will later make it natural to think in terms of relative position
rather than absolute coordinates.
\end{remark}

The transporter enjoys the formal properties one would expect.
For any $x,y,z\in X$, one has
\[
\operatorname{tr}(x,x)=e,
\qquad
\operatorname{tr}(y,z)\operatorname{tr}(x,y)=\operatorname{tr}(x,z),
\qquad
\operatorname{tr}(x,y)^{-1}=\operatorname{tr}(y,x).
\]
These identities are immediate from uniqueness.
They show that,
although the space $X$ itself need not carry a group law,
the comparison of its points is governed by the group $G$.

Figure~\ref{fig:transporter-composition-early} visualizes
the composition law for transporters.

\begin{figure}[H]
\centering
\begin{tikzpicture}[>=Latex, baseline=(current bounding box.center)]
  \node[circle, fill=black, inner sep=1.5pt, label=below:$x$] (x) at (0,0) {};
  \node[circle, fill=black, inner sep=1.5pt, label=below:$y$] (y) at (3,0) {};
  \node[circle, fill=black, inner sep=1.5pt, label=below:$z$] (z) at (6,0) {};

  \draw[->, thick] (x) -- node[above] {$\operatorname{tr}(x,y)$} (y);
  \draw[->, thick] (y) -- node[above] {$\operatorname{tr}(y,z)$} (z);
  \draw[->, thick, bend left=40] (x) to node[above] {$\operatorname{tr}(x,z)$} (z);
\end{tikzpicture}
\caption{Composition of transporters:
$\operatorname{tr}(y,z)\operatorname{tr}(x,y)=\operatorname{tr}(x,z)$.}
\label{fig:transporter-composition-early}
\end{figure}

\begin{proposition}
Let $G$ act freely and transitively on a nonempty set $X$,
and choose a point $x_0\in X$.
Then the map
\[
\phi_{x_0}:G\to X,\qquad g\mapsto g\cdot x_0,
\]
is a bijection.
\end{proposition}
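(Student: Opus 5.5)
The plan is to check injectivity and surjectivity of $\phi_{x_0}$ separately, using the preceding proposition: for every $x,y\in X$ there is a unique $g\in G$ with $g\cdot x=y$. Alternatively, I would appeal directly to the definitions of freeness and transitivity. Nonemptiness of $X$ is needed only so that the basepoint $x_0$ can be chosen at all.

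For \emph{surjectivity}, let $y\in X$ be arbitrary. Applying the existence half of the previous proposition to the pair $(x_0,y)$ gives some $g\in G$ with $g\cdot x_0=y$, that is, $\phi_{x_0}(g)=y$. In the transporter notation, one may take $g=\operatorname{tr}(x_0,y)$.

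For \emph{injectivity}, suppose $\phi_{x_0}(g)=\phi_{x_0}(h)$, so $g\cdot x_0=h\cdot x_0$. The uniqueness half of the previous proposition, applied to the pair $(x_0,\,g\cdot x_0)$, forces $g=h$. If I wanted to argue from freeness alone, I would instead act by $h^{-1}$ and use the action axioms $(h^{-1}g)\cdot x_0=h^{-1}\cdot(g\cdot x_0)$ and $e\cdot x_0=x_0$. This gives $(h^{-1}g)\cdot x_0=x_0$, and freeness then yields $h^{-1}g=e$.

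Finally, I would observe that the inverse map is $y\mapsto\operatorname{tr}(x_0,y)$, which makes the bijection explicit. There is no real obstacle here. The only point requiring care is the routine use of the compatibility axiom $(gh)\cdot x=g\cdot(h\cdot x)$ in the injectivity step, and even that disappears if one invokes the earlier proposition directly. I would also add a remark that the bijection depends on the choice of $x_0$, since this is the theme the paper emphasizes.
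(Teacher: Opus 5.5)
Your proof is correct and is essentially the paper's: surjectivity from transitivity, and injectivity from freeness via $(h^{-1}g)\cdot x_0=x_0$. Routing the argument through the unique-transport proposition only repackages the same two steps, and your explicit inverse $y\mapsto\operatorname{tr}(x_0,y)$ is the map $d_{x_0}$ that appears in the paper's later exercise.
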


\begin{proof}
Surjectivity follows from transitivity:
every point of $X$ has the form $g\cdot x_0$ for some $g\in G$.
Injectivity follows from freeness:
if $g\cdot x_0=h\cdot x_0$, then
\[
h^{-1}g\cdot x_0=x_0,
\]
hence $h^{-1}g=e$, so $g=h$.
\end{proof}

This proposition explains why a torsor may be informally described
as a group without a chosen identity element.
Once a point $x_0$ is selected,
the set $X$ becomes identified with $G$.
But that identification depends on the choice of $x_0$,
and there is generally no canonical reason to prefer one point over another.
Thus the group structure is present only through transport,
not through an intrinsically distinguished origin.

This non-canonical character is not an accidental inconvenience;
it is the essence of the torsor idea.
A torsor is not merely a copy of a group.
Rather, it is a space that locally or relationally behaves like a group,
while globally lacking a distinguished identity.
That is why torsors arise so naturally in contexts
where local choices are available but global choices are not.

The next section formalizes these ideas
by introducing the definition of a torsor.

\section{The Definition of a Torsor}

We are now ready to isolate the central notion of these notes.
Informally, a torsor is a space on which a group acts
freely and transitively.
Equivalently, it is a space in which every point can be transported
to every other point by a unique group element.
This makes a torsor look very much like the group itself,
except that no identity element is singled out in advance.

\begin{definition}
Let $G$ be a group.
A \emph{left $G$-torsor} is a nonempty set $X$
equipped with a left action of $G$ on $X$
such that the action is free and transitive.
\end{definition}

Unless otherwise stated, the word \emph{torsor}
will mean a left torsor in the set-theoretic sense.
Later we will consider sheaf-theoretic versions,
but the elementary case of sets is the first model to keep in mind.

By Proposition~3.3, the preceding definition admits
an equivalent and very useful reformulation.

\begin{proposition}
Let $G$ be a group acting on a nonempty set $X$.
Then the following are equivalent:
\begin{enumerate}
\item $X$ is a $G$-torsor;
\item for every $x,y\in X$, there exists a unique $g\in G$ such that
\[
g\cdot x=y.
\]
\end{enumerate}
\end{proposition}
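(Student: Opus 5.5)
This statement is essentially a repackaging of the first proposition of Section~3 (the equivalence between being free and transitive and having unique transporters), combined with the definition of a left $G$-torsor. The plan is to unfold the definition and invoke that earlier proposition. No new argument about group elements should be needed.

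First I would observe that, since $X$ is assumed nonempty in the hypothesis, the definition of a left $G$-torsor reduces to one requirement: the given action of $G$ on $X$ is free and transitive. Thus condition~(1) is equivalent to the statement ``the action is free and transitive.'' Next, I would apply the Section~3 proposition to the given action. It asserts exactly that free and transitivity together are equivalent to the existence, for all $x,y\in X$, of a unique $g\in G$ with $g\cdot x=y$, which is condition~(2). Chaining the two equivalences proves the claim.

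If a self-contained argument is preferred, I would recall the two directions briefly. Given free and transitivity, existence of $g$ comes from transitivity. Uniqueness follows because $g\cdot x=h\cdot x$ gives $h^{-1}g\cdot x=x$, hence $h^{-1}g=e$ by freeness. Conversely, existence yields transitivity, and comparing $g\cdot x=x$ with $e\cdot x=x$ via uniqueness yields freeness.

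The only point needing care, and the closest thing to an obstacle, is the role of nonemptiness. The torsor definition demands $X\neq\emptyset$, while condition~(2) holds vacuously for $X=\emptyset$. This is why the hypothesis ``nonempty'' is built into the statement, and I would remark explicitly that it is what makes (2) imply (1) rather than only ``free and transitive.''
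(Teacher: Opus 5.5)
Your proposal is correct and matches the paper's proof, which simply says the statement is the Section~3 equivalence between ``free and transitive'' and ``unique transport,'' restated through the definition of a torsor. Your self-contained recap of both directions and your remark that the nonemptiness hypothesis is what lets (2) give the full torsor definition are both accurate.
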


\begin{proof}
This is exactly Proposition~3.3,
restated in the language of torsors.
\end{proof}

Thus a torsor is characterized by unique transport.
This is one of the reasons the notion is so flexible:
instead of thinking of the group as a set of abstract symmetries,
one may think of it as the system of all relative displacements
between points of the torsor.

\begin{remark}
A torsor is often called a \emph{principal homogeneous space}.
The adjective ``homogeneous'' refers to the fact
that every point looks the same from the viewpoint of the action,
while ``principal'' reflects the absence of nontrivial stabilizers.
We will return to this terminology in Section~7.
\end{remark}

\subsection{Torsors as groups without chosen identity}

A group acts on itself by left multiplication,
and this action is free and transitive.
Hence every group $G$ determines a canonical example of a $G$-torsor,
namely the underlying set of $G$ itself.
What distinguishes a general torsor from the group acting on itself
is that a general torsor does not come with a preferred point
corresponding to the identity element.

\begin{proposition}
Let $X$ be a $G$-torsor, and choose a point $x_0\in X$.
Then the map
\[
\phi_{x_0}:G\to X,\qquad g\mapsto g\cdot x_0,
\]
is a bijection.
\end{proposition}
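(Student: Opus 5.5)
This statement is essentially the earlier bijection proposition of Section~3, recast in torsor language. The plan is to reduce it to that result directly. By the definition of a left $G$-torsor, $X$ is a nonempty set carrying a free and transitive left action of $G$. These are exactly the hypotheses of the proposition in Section~3 asserting that $\phi_{x_0}:g\mapsto g\cdot x_0$ is a bijection. So the proof can be a one-line citation, just as the previous proposition was obtained by citing Proposition~3.3.

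For self-containedness I would also sketch the direct argument, organized around unique transport rather than freeness and transitivity separately. By the equivalent characterization of torsors given just above, for each $y\in X$ there is a unique $g\in G$ with $g\cdot x_0=y$, namely the transporter $\operatorname{tr}(x_0,y)$. Existence of this element gives surjectivity of $\phi_{x_0}$. Uniqueness gives injectivity: if $g\cdot x_0=h\cdot x_0$, then $g$ and $h$ both transport $x_0$ to the same point, so $g=h$. As a bonus, this identifies the inverse map explicitly as $y\mapsto\operatorname{tr}(x_0,y)$.

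There is no genuine obstacle. The only point to watch is logical: the hypothesis that $X$ is nonempty is needed only to make the choice of $x_0$ possible, and that choice is given in the statement. I would close by remarking that the bijection depends on $x_0$. Changing the basepoint to $x_1=\operatorname{tr}(x_0,x_1)\cdot x_0$ gives $\phi_{x_1}(g)=\phi_{x_0}\bigl(g\operatorname{tr}(x_0,x_1)\bigr)$, that is, it changes $\phi_{x_0}$ by right multiplication on $G$. This makes visible the non-canonical nature of the identification emphasized in this subsection.
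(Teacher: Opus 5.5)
Your proposal is correct and matches the paper, which proves this simply by citing the bijection proposition of Section~3 (which it calls Proposition~3.5). Your supplementary direct argument via unique transport is also sound. So is the base-change formula $\phi_{x_1}(g)=\phi_{x_0}(g\operatorname{tr}(x_0,x_1))$, which is exactly Exercise~7 of the paper.
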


\begin{proof}
This was proved in Proposition~3.5.
\end{proof}

The significance of this bijection is conceptual rather than merely formal.
It shows that every torsor becomes indistinguishable from the group
once a basepoint has been chosen.
However, the choice of basepoint is external to the torsor structure itself.
Different choices lead to different identifications,
and there is generally no canonical one.
Thus a torsor is not literally a group,
but rather a space that becomes group-like after a choice is made.

\begin{remark}
This is why one often hears the slogan
that a torsor is ``a group without a chosen identity element.''
The slogan is useful, provided one remembers
that a torsor is not itself equipped with a multiplication law.
What it possesses is not an internal group structure,
but an external simply transitive action of a group.
\end{remark}

\subsection{Transport and relative position}

If $X$ is a $G$-torsor,
then for every pair of points $x,y\in X$
there is a unique transporter
\[
\operatorname{tr}(x,y)\in G
\]
such that
\[
\operatorname{tr}(x,y)\cdot x=y.
\]
This perspective allows one to think of the acting group
as encoding all relative positions in $X$.

Figure~\ref{fig:transporter-relative-position} emphasizes this point:
a torsor is governed by transport between points
rather than by absolute coordinates.

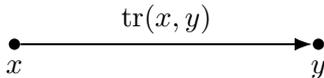
\begin{figure}[H]
\centering
\begin{tikzpicture}[>=Latex, baseline=(current bounding box.center)]
  \node[circle, fill=black, inner sep=1.5pt, label=below:$x$] (x) at (0,0) {};
  \node[circle, fill=black, inner sep=1.5pt, label=below:$y$] (y) at (4,0) {};
  \draw[->, thick] (x) -- node[above] {$\operatorname{tr}(x,y)$} (y);
\end{tikzpicture}
\caption{In a torsor, relative position is measured by the unique transporter
between two points.}
\label{fig:transporter-relative-position}
\end{figure}

The identities
\[
\operatorname{tr}(x,x)=e,
\qquad
\operatorname{tr}(y,z)\operatorname{tr}(x,y)=\operatorname{tr}(x,z),
\qquad
\operatorname{tr}(x,y)^{-1}=\operatorname{tr}(y,x)
\]
show that the transporter behaves like a difference operation.
In an affine space, for example,
the ``difference'' between two points is a vector.
A torsor generalizes this phenomenon:
the difference between two points is measured not in a vector space,
but in the acting group.

This viewpoint is particularly important for the later sections.
When torsors arise from local triviality and gluing data,
or when they are considered in a sheaf-theoretic setting,
it is often the relative comparison of local choices
that matters most.
Thus the transporter formalism is not merely convenient notation;
it expresses the basic logic of the subject.

\subsection{Right torsors}

For completeness, we record the right-handed version of the notion.

\begin{definition}
Let $G$ be a group.
A \emph{right $G$-torsor} is a nonempty set $X$
equipped with a right action
\[
X\times G\to X,\qquad (x,g)\mapsto x\cdot g,
\]
such that the action is free and transitive.
\end{definition}

Everything said above has an evident right-handed analogue.
In these notes, left torsors will remain the default convention,
since they are sufficient for the intended applications.
However, in geometry and descent theory
both left and right torsors arise naturally,
and one must pay attention to conventions.

\subsection{The importance of nonemptiness}

The nonemptiness condition in the definition of a torsor
is not a mere technicality.
If $X$ were allowed to be empty,
then the action would vacuously satisfy several formal properties,
but the fundamental intuition of a torsor would be lost.
A torsor is meant to model a space in which points can be compared
by unique transports.
Without points, there is nothing to transport.

Moreover, in later sheaf-theoretic contexts,
the distinction between local nonemptiness and global nonemptiness
becomes essential.
A sheaf torsor may be locally inhabited
without admitting any global section.
This local/global tension is one of the main reasons
the notion becomes mathematically powerful.

\subsection{First summary}

Let us summarize the basic picture.
A $G$-torsor is a nonempty set $X$
on which $G$ acts freely and transitively.
Equivalently, every pair of points in $X$
determines a unique transporter in $G$.
Choosing a basepoint identifies $X$ with $G$,
but only non-canonically.
Thus a torsor is group-like without carrying a preferred identity.

This is the elementary definition.
Its importance, however, lies not only in the formal condition
of free transitivity,
but also in the way it prepares us for later constructions.
A torsor is the simplest setting in which
one has coherent relative position without absolute origin.
For that reason, torsors arise naturally in affine geometry,
in gluing problems,
in descent-theoretic situations,
and, as we shall later suggest,
in sheaf-theoretic approaches to $\Sigma$-protocols.

The next section develops several basic examples,
which should be kept in mind throughout the rest of the paper.

\section{Basic Examples of Torsors}

The abstract definition of a torsor becomes much clearer
once one has several concrete examples in mind.
Among these, affine spaces are by far the most important for intuition.
They show in the simplest possible way
how one may have a meaningful notion of difference
without having a canonically chosen origin.
After discussing affine spaces first,
we consider several further examples
that illustrate the same pattern in algebraic settings.

\subsection{Affine spaces}

Affine spaces provide the basic geometric model of a torsor.
Informally, an affine space is like a vector space
from which the origin has been forgotten.
One may compare points by taking their difference,
but one does not intrinsically add points to points.

\begin{definition}
Let $V$ be a vector space over a field $k$.
An \emph{affine space modeled on $V$}
is a nonempty set $A$
equipped with a map
\[
A\times V \to A,\qquad (a,v)\mapsto a+v,
\]
such that:
\begin{enumerate}
\item $a+0=a$ for all $a\in A$,
\item $(a+v)+w=a+(v+w)$ for all $a\in A$ and $v,w\in V$,
\item for every $a,b\in A$,
there exists a unique $v\in V$ such that
\[
a+v=b.
\]
\end{enumerate}
\end{definition}

The third condition says exactly
that the action of the additive group of $V$ on $A$
is free and transitive.
Thus an affine space modeled on $V$
is precisely a $V$-torsor,
where $V$ is regarded as an additive group.

\begin{proposition}
Let $V$ be a vector space.
An affine space modeled on $V$
is the same thing as a torsor under the additive group $(V,+)$.
\end{proposition}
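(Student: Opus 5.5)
The plan is to give explicit translations in both directions between the two structures and then check that they are mutually inverse. The only wrinkle is that affine spaces are written with a right-handed notation $a+v$, whereas torsors in these notes are left torsors. Since $(V,+)$ is abelian, left and right actions coincide. Concretely, given the map $(a,v)\mapsto a+v$, I will define a left action of $(V,+)$ on $A$ by
\[
v\cdot a := a+v ,
\]
and conversely, given a left action, set $a+v:=v\cdot a$. These two assignments are visibly inverse to one another, so it suffices to check that each takes one structure to the other.

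\textbf{From affine space to torsor.} First I check the action axioms of Definition~2.1. The identity axiom $0\cdot a=a$ is condition~1 of the affine space definition. For compatibility, condition~2 gives
\[
(v+w)\cdot a = a+(v+w) = (a+w)+v = v\cdot(w\cdot a),
\]
where commutativity of $V$ is used to swap $v+w$ into $w+v$. Here lies the main (mild) point of care: the order of composition must match $(gh)\cdot x=g\cdot(h\cdot x)$, and abelianness is exactly what makes the right-style axiom a left action. Next, $A$ is nonempty by definition, and condition~3 says that for all $a,b$ there is a unique $v$ with $v\cdot a=b$. By Proposition~4.2 (equivalently Proposition~3.3), this is exactly the statement that the action is free and transitive, so $A$ is a $(V,+)$-torsor.

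\textbf{From torsor to affine space.} Given a $(V,+)$-torsor $X$, the same computations run backwards. The identity axiom of the action yields condition~1, and the compatibility axiom together with commutativity yields condition~2. Proposition~4.2 supplies condition~3 from freeness and transitivity, and nonemptiness is part of the torsor definition. Since the two constructions are inverse, the two notions coincide.
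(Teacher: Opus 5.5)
Your proposal is correct and follows the same route as the paper. Both arguments read the affine-space axioms as the action axioms plus unique transport, and both invoke the unique-transport characterization of free transitive actions in each direction. You are in fact more careful than the paper's brief sketch, which silently treats the right-handed notation $a+v$ as a left action: you explicitly use commutativity of $(V,+)$ to turn $(a+v)+w=a+(v+w)$ into the left-action law $(gh)\cdot x=g\cdot(h\cdot x)$, which closes exactly that small gap.
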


\begin{proof}
The axioms of an affine space state precisely
that the translation action of $(V,+)$ on $A$
is defined, free, and transitive.
Conversely, any torsor under the additive group $(V,+)$
may be written in the form $a+v$,
and then satisfies the affine-space axioms.
\end{proof}

This example is fundamental enough
that one may view torsors as a direct generalization of affine spaces.
In an affine space, the quantity relating two points
is a vector.
In a general torsor, the quantity relating two points
is an element of the acting group.
Thus affine spaces are the commutative linear prototype
of the general theory.

\begin{remark}
The affine-space viewpoint is not merely a matter of terminology.
Its real advantage is conceptual:
it makes clear from the beginning
that the primary objects of interest are not absolute positions,
but differences, displacements, and translations.
This is exactly the structural idea that later reappears
in the general theory of torsors.
\end{remark}

If one chooses a point $a_0\in A$,
then every point $a\in A$ can be written uniquely as
\[
a=a_0+v
\]
for some $v\in V$.
Thus the choice of $a_0$ identifies $A$ with $V$.
But this identification depends on the chosen basepoint,
and there is no canonical reason to prefer one point of $A$
over another.
This is exactly the non-canonical trivialization phenomenon
that is characteristic of torsors in general.

\begin{remark}
A useful advantage of treating the underlying space of analysis
as an affine space is that it prevents one
from regarding the origin as intrinsically distinguished.
In many analytic constructions,
what matters is not a point considered in absolute isolation,
but rather an increment or displacement.
Typical expressions such as
\[
f(x+h)-f(x)
\]
already show this clearly:
the essential quantity is the variation $h$,
not the existence of a preferred origin in the domain.
From this viewpoint,
the domain is naturally an affine space,
while the increments belong to the associated vector space.
This separates points from differences
and makes explicit that analysis is often governed
by relative displacement rather than absolute position.
This is very much in the spirit of the affine viewpoint
in analysis found in Schwartz \cite{Schwartz}.
\end{remark}

\begin{remark}
One may summarize the point as follows:
in an affine approach to analysis,
the point itself belongs to the affine space,
whereas the increment belongs to the associated vector space.
Thus one does not confuse \emph{where one is}
with \emph{how one moves}.
This distinction is conceptually useful already in elementary analysis,
and it becomes even more important in geometry,
where manifolds and configuration spaces are not naturally vector spaces
but are locally governed by displacement data.
For the purposes of these notes,
this is one of the main reasons affine spaces
form the best first model of a torsor.
\end{remark}

\subsection{Solution sets of linear equations}

A second basic example comes from elementary linear algebra.
Very often, the set of solutions to a linear equation
is not naturally a vector space,
but becomes one after a choice of a particular solution.
This is one of the simplest ways in which torsors arise in practice.

\begin{example}
Let $T:V\to W$ be a linear map of vector spaces,
and let $w\in W$.
Consider the solution set
\[
S_w:=\{v\in V\mid T(v)=w\}.
\]
If $S_w$ is nonempty, then it is naturally a torsor
under the additive group of $\ker(T)$.
\end{example}

\begin{proof}
Let $u\in \ker(T)$ and $v\in S_w$.
Then
\[
T(v+u)=T(v)+T(u)=w+0=w,
\]
so $v+u\in S_w$.
Thus $\ker(T)$ acts on $S_w$ by translation.

To see that the action is transitive,
let $v_1,v_2\in S_w$.
Then
\[
T(v_2-v_1)=T(v_2)-T(v_1)=w-w=0,
\]
so $v_2-v_1\in \ker(T)$,
and therefore
\[
v_1+(v_2-v_1)=v_2.
\]
To see that the action is free,
suppose that $v+u=v$ with $u\in\ker(T)$.
Then $u=0$.
Hence the action is free and transitive.
\end{proof}

Thus, whenever a linear equation has at least one solution,
its solution set is not merely a set:
it is an affine space modeled on the kernel.
Equivalently, it is a torsor under $\ker(T)$.
This example is especially useful because it shows
that torsors arise naturally whenever one studies
objects determined only up to homogeneous correction.

\subsection{Cosets}

A third basic class of examples comes from group theory itself.

\begin{example}
Let $G$ be a group and let $H\leq G$ be a subgroup.
For any $g\in G$, the left coset
\[
gH=\{gh\mid h\in H\}
\]
is naturally a right $H$-torsor,
with action given by multiplication on the right:
\[
(gh_1)\cdot h_2:=g(h_1h_2).
\]
Similarly, it may be viewed as a left torsor
after choosing the opposite convention.
\end{example}

\begin{proof}
The right action of $H$ on $gH$ is transitive
because every element of $gH$ has the form $gh$.
It is free because
\[
(gh)\cdot h'=gh
\]
implies
\[
ghh'=gh,
\]
hence $h'=e$.
Therefore $gH$ is a right $H$-torsor.
\end{proof}

This example is conceptually important.
A coset resembles the subgroup $H$,
but does not come with a preferred identity element of its own.
Again, we have a structure that is group-like
without being canonically a group.

\subsection{Sets of bases and frames}

Another instructive example is provided by the set of all ordered bases
of a finite-dimensional vector space.

\begin{example}
Let $V$ be an $n$-dimensional vector space over a field $k$,
and let $\mathcal{B}(V)$ denote the set of ordered bases of $V$.
Then $\mathcal{B}(V)$ is a torsor under the group $\mathrm{GL}(V)$.
\end{example}

\begin{proof}
The group $\mathrm{GL}(V)$ acts on $\mathcal{B}(V)$
by sending a basis $(v_1,\dots,v_n)$ to
\[
(gv_1,\dots,gv_n).
\]
The action is transitive because any ordered basis
can be sent to any other by a unique invertible linear map.
It is free because if $g$ fixes an ordered basis,
then it fixes each basis vector,
hence $g=\mathrm{id}_V$.
Thus the action is free and transitive.
\end{proof}

This example shows that torsors arise naturally
whenever one studies choices of coordinate systems or frames.
A basis is a choice,
but the set of all such choices is often more natural
than any single one of them.
In geometry, this idea becomes even more important:
frame bundles are built from exactly this sort of torsor structure.

\subsection{First comparison of the examples}

Although the preceding examples come from different areas,
they all share the same formal pattern.
In each case:
\begin{enumerate}
\item there is a set of objects or choices,
\item there is a group of allowed transformations,
\item every object can be reached from every other by a unique transformation.
\end{enumerate}
This is precisely the torsor condition.

What varies from example to example
is the interpretation of the acting group.
For affine spaces, it is a vector space acting by translations.
For solution spaces, it is the kernel of a linear map.
For cosets, it is a subgroup acting by multiplication.
For bases, it is the general linear group.
Thus the torsor concept isolates a common structural pattern
underlying apparently different mathematical situations.

\begin{remark}
For the purposes of these notes,
the affine-space example should remain in the foreground.
It is the clearest illustration of the fact
that a torsor has meaningful differences
without possessing a distinguished origin.
Much of the later theory may be understood
as a far-reaching generalization of this simple geometric idea.
\end{remark}

The next section develops this point further
by studying basepoints, differences, and transporters in general.

\section{Exercises on the Basic Theory}

This section contains a small collection of exercises
intended to reinforce the elementary theory of torsors.
Most of them are straightforward,
but they are worth working out carefully,
since the later sections will rely on these ideas.

\begin{enumerate}

\item
Let $G$ act on a set $X$.
Show that the action is free if and only if
the stabilizer of every point is trivial.

\item
Let $G$ act on a set $X$.
Show that the action is transitive if and only if
$X$ consists of a single orbit.

\item
Give an example of an action that is transitive but not free.

\item
Give an example of an action that is free but not transitive.

\item
Let $G$ act freely and transitively on a nonempty set $X$.
Prove directly that for every $x,y\in X$
there exists a unique $g\in G$ such that
\[
g\cdot x=y.
\]

\item
Let $X$ be a $G$-torsor and let $x_0\in X$.
Prove that the map
\[
\phi_{x_0}:G\to X,\qquad g\mapsto g\cdot x_0
\]
is a bijection.

\item
Let $X$ be a $G$-torsor and let $x_0,x_1\in X$.
If $h\in G$ is the unique element such that
\[
h\cdot x_0=x_1,
\]
show that
\[
\phi_{x_1}(g)=\phi_{x_0}(gh)
\]
for all $g\in G$.

\item
Let $X$ be a $G$-torsor.
Prove the transporter identities
\begin{align*}
\operatorname{tr}(x,x)&=e,\\
\operatorname{tr}(x,y)^{-1}&=\operatorname{tr}(y,x),\\
\operatorname{tr}(y,z)\operatorname{tr}(x,y)&=\operatorname{tr}(x,z).
\end{align*}

\item
Let $A$ be an affine space modeled on a vector space $V$.
Show that $A$ is a torsor under the additive group $(V,+)$.

\item
Let $T:V\to W$ be a linear map of vector spaces
and let
\[
S_w=\{v\in V\mid T(v)=w\}.
\]
Assume that $S_w$ is nonempty.
Show that $S_w$ is an affine space modeled on $\ker(T)$.

\item
Let $G$ be a group and $H\leq G$ a subgroup.
Show that each left coset $gH$ is a right $H$-torsor.

\item
Let $V$ be a finite-dimensional vector space.
Show that the set of ordered bases of $V$
is a torsor under $\mathrm{GL}(V)$.

\item
Let $X$ be a $G$-torsor.
Fix a point $x_0\in X$ and define a binary operation on $X$
by transporting the group law along the bijection
\[
G\to X,\qquad g\mapsto g\cdot x_0.
\]
Show that this makes $X$ into a group.
Then show that the resulting group structure depends on the choice of $x_0$.

\item
Explain why the previous exercise does not contradict the statement
that a torsor does not carry a canonical group structure.

\item
Let $X$ be a $G$-torsor.
For fixed $x\in X$, define
\[
d_x(y):=\operatorname{tr}(x,y).
\]
Show that $d_x:X\to G$ is a bijection.
Interpret this as a coordinate system determined by the basepoint $x$.

\item
In your own words, explain the difference between the following two statements:
\begin{enumerate}
\item a torsor becomes identified with a group after a basepoint is chosen;
\item a torsor is canonically a group.
\end{enumerate}

\item
Why is nonemptiness included in the definition of a torsor?
What would go wrong conceptually if the empty set were allowed?

\item
Write a short paragraph explaining why affine spaces
provide the best first model for the general theory of torsors.

\end{enumerate}

\section{Local Triviality and Gluing Data}

Up to this point, torsors have been presented in their elementary form:
a torsor is a nonempty set equipped with a free and transitive group action.
This definition is fundamental,
and nothing in what follows replaces it.
However, if one stops at that point,
one has not yet seen the deeper reason
why torsors arise so naturally in geometry,
descent theory, and local-to-global constructions.

The present section is therefore conceptually important.
Its purpose is to bring to the foreground a second viewpoint,
equally essential for the later theory:
a torsor is not only an object defined by a free and transitive action,
but also an object that is \emph{locally trivial}
and may nevertheless be \emph{globally nontrivial}
because it is assembled from local pieces by gluing data.
In this sense, the absence of a preferred basepoint
is not merely a negative statement.
It is the visible sign of the fact
that local trivializations may exist
without fitting together into a single global one.

\begin{remark}
This shift of viewpoint is one of the main structural points of the paper.
A torsor should not be thought of only as
“a set with a free and transitive action.”
It should also be thought of as
“a locally trivial object whose global form is determined by how local trivializations are glued together.”
The later sheaf-theoretic and topos-theoretic applications
depend heavily on this second perspective.
\end{remark}

This section is meant to isolate that idea at an intuitive level.
The formal sheaf-theoretic and topos-theoretic versions
will be discussed later.
For now, the main point is simply that a torsor,
although not globally endowed with a preferred origin,
may locally admit such choices,
and the change from one local choice to another
is measured by group-valued transition data.

\subsection{Local triviality in the elementary sense}

Let $X$ be a $G$-torsor.
If one chooses a basepoint $x_0\in X$,
then the map
\[
\phi_{x_0}:G\to X,\qquad g\mapsto g\cdot x_0
\]
is a bijection.
Thus, after choosing a point,
the torsor becomes identified with the acting group.
In this sense, a torsor is always \emph{trivialized} by a basepoint.

At first sight, this may seem to reduce a torsor to a copy of the group.
But that is not the correct conclusion.
The crucial point is that the trivialization depends on a choice,
and that choice is generally not canonical.
This is exactly where the gluing viewpoint begins:
one may have trivializations,
but not a preferred one.

\begin{remark}
The importance of local triviality is that it explains
how a torsor can be “group-like” without being canonically a group.
Locally, after a choice of basepoint,
it looks exactly like the acting group.
Globally, however, the lack of a preferred choice remains.
Thus the nonexistence of a canonical origin
is not an accidental omission;
it is the structural reason the torsor must be described by gluing.
\end{remark}

What makes the later theory interesting is that,
in many contexts,
such a choice is available only locally and not globally.
The local triviality of a torsor means that over each sufficiently small region
one can choose a local origin,
or equivalently,
identify the torsor with the acting group.
But these local identifications need not agree on overlaps.
Their disagreement is precisely what gives rise to gluing data.

Even in the elementary set-theoretic situation,
this point is already visible.
A basepoint trivializes the torsor,
but a different basepoint gives a different trivialization.
The passage from one trivialization to another
is controlled by a unique element of the acting group.
The later theory simply organizes this phenomenon systematically
over a family of local regions.

\subsection{From one trivialization to another}

Suppose again that $X$ is a $G$-torsor,
and let $x_0,x_1\in X$.
As seen earlier, there is a unique element $h\in G$ such that
\[
h\cdot x_0=x_1.
\]
The corresponding trivializations
\[
\phi_{x_0},\phi_{x_1}:G\to X
\]
are related by
\[
\phi_{x_1}(g)=\phi_{x_0}(gh).
\]

This formula is elementary,
but it is the prototype for transition functions.
It says that changing the chosen origin
does not destroy the description of the torsor;
rather, it modifies that description by translation in the group.
Thus a torsor does not come with a preferred coordinate system,
but the set of all coordinate systems is itself controlled by the acting group.

\begin{remark}
This is a good place to state the main idea again.
A torsor is locally trivial because each chosen basepoint
identifies it with the acting group.
But the torsor is not globally trivial in any canonical way,
because distinct trivializations differ by nontrivial transition elements.
So the lack of a basepoint should be understood positively:
it is the trace, already in elementary form,
of the gluing phenomenon.
\end{remark}

In geometric situations, this phenomenon occurs repeatedly.
One chooses local trivializations on overlapping regions,
and on each overlap one compares the two choices.
The comparison is encoded by an element, or more generally a function,
with values in the acting group.
These are the transition data of the torsor.

\subsection{The gluing viewpoint}

The gluing viewpoint begins with the following observation.
A torsor need not be given all at once as a globally trivial object.
Instead, it may be constructed from pieces,
each of which is individually trivial,
by specifying how these pieces are to be identified on their overlaps.

To express this informally,
suppose that an ambient space is covered by regions
\[
U_i.
\]
Over each $U_i$,
assume that our torsor looks just like the group $G$ itself.
In other words,
over each region we have chosen a local trivialization.
Then on an overlap $U_i\cap U_j$,
the two local trivializations need not coincide.

Figure~\ref{fig:gluing-two-charts} illustrates the basic gluing situation:
local trivializations are compared on the overlap by transition data.

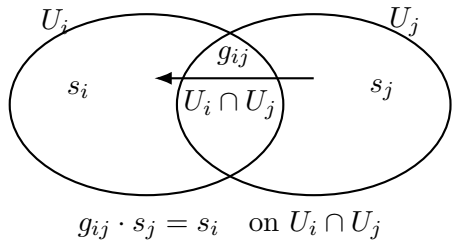
\begin{figure}[H]
\centering
\begin{tikzpicture}[>=Latex, baseline=(current bounding box.center)]
  \draw[thick] (0,0) ellipse (1.8 and 1.2);
  \draw[thick] (2.2,0) ellipse (1.8 and 1.2);

  \node at (-1.2,1.1) {$U_i$};
  \node at (3.4,1.1) {$U_j$};
  \node at (1.1,0) {$U_i\cap U_j$};

  \node at (-0.9,0.2) {$s_i$};
  \node at (3.1,0.2) {$s_j$};

  \draw[->, thick] (2.2,0.35) -- node[above] {$g_{ij}$} (0.1,0.35);

  \node at (1.1,-1.6) {$g_{ij}\cdot s_j = s_i \quad \text{on } U_i\cap U_j$};
\end{tikzpicture}
\caption{Two local trivializations are compared on the overlap
by transition data $g_{ij}$.}
\label{fig:gluing-two-charts}
\end{figure}

Their discrepancy is measured by a transformation
\[
g_{ij}
\]
taking values in $G$.
These transformations describe how to pass
from the $j$-th local picture to the $i$-th local picture.

The essential idea is that the torsor is recovered
not from any single trivialization,
but from the entire compatible family of local trivializations
together with the transition data relating them.
This is precisely the sense in which a torsor is a glued object.

\begin{remark}
Here the conceptual picture should be kept completely explicit:
a torsor is locally trivial,
because over each region it may be identified with the group.
It is globally a glued object,
because these local identifications differ on overlaps.
The absence of a globally distinguished basepoint
is therefore the same structural phenomenon
as the failure of local trivializations to merge into one canonical global trivialization.
\end{remark}

\subsection{Compatibility on overlaps}

Once local trivializations are chosen on a family of regions,
the transition data cannot be arbitrary.
They must satisfy compatibility conditions.

At the most basic level,
if one compares a trivialization with itself,
there should be no change.
Thus one expects
\[
g_{ii}=e.
\]
If one changes from the $i$-th trivialization to the $j$-th
and then changes back,
the result should again be no change.
Thus one expects
\[
g_{ji}=g_{ij}^{-1}.
\]

More importantly,
on a triple overlap
\[
U_i\cap U_j\cap U_k,
\]
one can compare trivializations in two different ways.
One may pass directly from the $k$-th trivialization to the $i$-th,
or one may pass first from $k$ to $j$ and then from $j$ to $i$.
For consistency, these two procedures must agree.
Thus one expects the relation
\[
g_{ij}g_{jk}=g_{ik}.
\]

This is the cocycle condition.
It expresses the simple but crucial fact
that local changes of coordinates must be compatible
if they are to define a coherent global object.

\begin{remark}
The cocycle condition should be read as the algebraic form
of global coherence.
It guarantees that the torsor is not just a collection of local copies of the group,
but a single global object assembled from them.
Without this compatibility,
one would not obtain a well-defined glued object at all.
\end{remark}

\subsection{Why global triviality may fail}

If all local trivializations could be made to agree globally,
then one would obtain a single preferred trivialization of the torsor.
Equivalently, one would obtain a globally chosen basepoint
or a global identification with the acting group.
But in many situations this is impossible.

This failure is exactly what gives torsors much of their interest.
A torsor is locally indistinguishable from the acting group,
yet globally it may remain twisted.
The obstruction is not visible inside any one local piece;
it appears only when one tries to compare all local pieces simultaneously.

\begin{remark}
This is the point that should be retained for the later theory:
the absence of a global origin is not merely an absence.
It is a structural obstruction.
A torsor carries enough local symmetry to look trivial nearby,
but not enough global coherence to force a single global basepoint.
That is why torsors are naturally studied through gluing and cocycles.
\end{remark}

This local/global tension should be kept in mind.
It is one of the main reasons torsors appear naturally
in geometry, cohomology, and descent theory.
The torsor formalism records the fact that
local choices may exist everywhere
without assembling into a single global choice.

\subsection{Affine spaces revisited}

The affine-space example may again be used as a guide.
An affine space is globally trivial once a point is chosen,
but no point is preferred.
Thus it already exhibits, in the simplest possible form,
the principle that local or auxiliary choices
may exist without being canonical.

In more complicated settings,
one may have not merely several possible basepoints,
but several local coordinate systems defined on overlapping domains.
The transitions among these systems are then measured
by the acting group,
just as in the affine case
the choice of one origin rather than another
changes the coordinate description by translation.

Thus affine spaces are not merely the first example of torsors;
they are also the first example of the gluing philosophy behind torsors.

\begin{remark}
Affine spaces make the central slogan especially transparent:
one can work perfectly well with local or chosen origins,
but the geometry itself does not single out one.
The general torsor formalism extends exactly this idea:
local trivializations may exist,
yet the object itself is defined by how those trivializations are related,
not by the presence of a canonical global basepoint.
\end{remark}

\subsection{Preparation for the next section}

The purpose of the present section has been conceptual rather than technical.
We have not yet defined torsors over sheaves,
nor have we formulated the full reconstruction theory.
Instead, we have isolated the basic pattern:

\begin{enumerate}
\item a torsor is locally trivial;
\item different local trivializations are related by group-valued transition data;
\item compatibility of these transitions is expressed by cocycle identities;
\item the failure of a global trivialization is the essential source of twisting.
\end{enumerate}

Let us say the key point one last time.
A torsor is not merely a set with a free and transitive action.
That definition is the formal starting point.
But the deeper geometric meaning of a torsor is that
it is an object which is locally indistinguishable from the acting group
and globally assembled by gluing.
The lack of a preferred basepoint is therefore best understood
as the failure of local trivializations
to combine into a single canonical global trivialization.

The next section makes this more precise
by formulating cocycle descriptions
and explaining how a torsor may be reconstructed from such data.

\section{Cocycle Descriptions and Reconstruction}

In the previous section, we emphasized a change of viewpoint
that is central to the later theory:
a torsor is not only an object defined by a free and transitive action,
but also an object that is locally trivial
and globally assembled by gluing data.
We now make that idea more precise.

The basic principle is simple.
Suppose an object is trivial on each member of a covering.
Then the global object is determined by the way
these local trivial pieces are identified on overlaps.
For torsors, those identifications are measured by elements,
or more generally functions,
with values in the acting group.
The compatibility of these identifications is expressed
by cocycle conditions.

\begin{remark}
This section should be read as the formal counterpart
of the conceptual picture developed in Section~8.
The key idea remains the same:
a torsor is locally a copy of the acting group,
but globally it may be twisted.
What records the twisting is the cocycle data.
\end{remark}

\subsection{Transition data from local trivializations}

Let us begin informally with a family of regions
\[
U_i
\]
covering some ambient space.
Suppose that over each $U_i$,
our torsor has been trivialized.
That is, over each $U_i$,
it has been identified with the acting group $G$.

Now consider an overlap
\[
U_i\cap U_j.
\]
Over this overlap,
we have two trivializations:
the one coming from $U_i$
and the one coming from $U_j$.
Since both describe the same torsor on the overlap,
they must differ by a transformation of the group.
Thus one obtains a transition function
\[
g_{ij}:U_i\cap U_j\to G.
\]

The meaning of $g_{ij}$ is that it tells us
how to pass from the $j$-th local trivialization
to the $i$-th one.
In other words, the family $\{g_{ij}\}$ records
the failure of the local trivializations to agree directly.

\begin{remark}
This is the first precise appearance of the idea
that the lack of a global basepoint
is encoded by the mismatch among local trivializations.
If there were a single global trivialization,
then all local trivializations could be chosen compatibly,
and the transition data would become trivial.
\end{remark}

\subsection{The cocycle identities}

The transition functions $\{g_{ij}\}$ cannot be arbitrary.
They must satisfy the same consistency requirements
discussed informally in Section~8.

First, on each region $U_i$,
comparing a trivialization with itself gives no change,
so one expects
\[
g_{ii}=e.
\]

Second, on each overlap $U_i\cap U_j$,
changing from the $j$-th trivialization to the $i$-th
and then back again should do nothing,
so one expects
\[
g_{ji}=g_{ij}^{-1}.
\]

Third, and most importantly,
on a triple overlap
\[
U_i\cap U_j\cap U_k,
\]
there are two ways to compare the $k$-th trivialization
with the $i$-th:
either directly,
or via the intermediate trivialization on $U_j$.
These must agree.
Hence one obtains
\[
g_{ij}g_{jk}=g_{ik}.
\]

Figure~\ref{fig:cocycle-condition} illustrates the cocycle condition
as the requirement that two different routes of comparison
on a triple overlap give the same result.

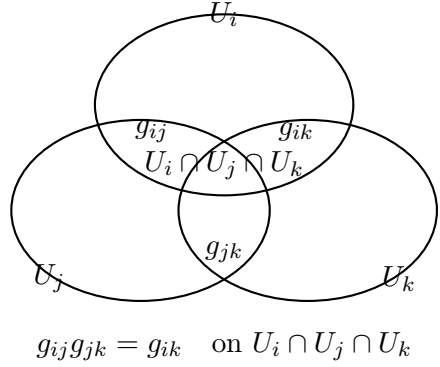
\begin{figure}[H]
\centering
\begin{tikzpicture}[>=Latex, baseline=(current bounding box.center)]
  \draw[thick] (0,0.8) ellipse (1.7 and 1.2);
  \draw[thick] (-1.1,-0.6) ellipse (1.7 and 1.2);
  \draw[thick] (1.1,-0.6) ellipse (1.7 and 1.2);

  \node at (0,2.0) {$U_i$};
  \node at (-2.3,-1.5) {$U_j$};
  \node at (2.3,-1.5) {$U_k$};

  \node at (-0.95,0.45) {$g_{ij}$};
  \node at (0.95,0.45) {$g_{ik}$};
  \node at (0,-1.15) {$g_{jk}$};

  \node at (0,0.0) {$U_i\cap U_j\cap U_k$};

  \node at (0,-2.4) {$g_{ij}g_{jk}=g_{ik}\quad\text{on }U_i\cap U_j\cap U_k$};
\end{tikzpicture}
\caption{On a triple overlap, the two ways of passing between local trivializations
must agree; this is the cocycle condition.}
\label{fig:cocycle-condition}
\end{figure}

This is the cocycle condition.

\begin{definition}
A family of transition functions
\[
g_{ij}:U_i\cap U_j\to G
\]
is called a \emph{$1$-cocycle with values in $G$}
if on all relevant overlaps it satisfies
\begin{align*}
g_{ii}&=e,\\
g_{ji}&=g_{ij}^{-1},\\
g_{ij}g_{jk}&=g_{ik}.
\end{align*}
\end{definition}

\begin{remark}
At this stage, the word ``cocycle'' need not be intimidating.
It simply means a compatible system of transition data.
The cocycle condition is the algebraic formulation
of the fact that local coordinate changes compose consistently.
\end{remark}

\subsection{From a cocycle to a torsor}

Conversely, one may start with cocycle data
and reconstruct a torsor from it.

Suppose we are given local trivial pieces,
each modeled on the group $G$ over a region $U_i$,
together with transition functions
\[
g_{ij}:U_i\cap U_j\to G
\]
satisfying the cocycle condition.
One then glues the local copies of $G$
by declaring that on overlaps,
a point described in the $j$-th trivialization
is identified with the corresponding point
in the $i$-th trivialization
after applying the transition function $g_{ij}$.

The cocycle condition is exactly what guarantees
that this identification process is consistent on triple overlaps.
Without that condition,
one would not obtain a well-defined global object.
With it, one obtains a globally defined torsor
which is locally identified with $G$.

Thus cocycles do not merely record torsors:
they also generate them.

\begin{remark}
This reconstruction viewpoint is crucial.
A torsor is not just an already existing object
that happens to admit local trivializations.
It can actually be built from those trivializations
and the cocycle that compares them.
In this sense, cocycles are not secondary bookkeeping devices;
they are the very gluing instructions of the torsor.
\end{remark}

\subsection{Equivalent cocycles}

Different choices of local trivializations
may produce different cocycles
while still describing the same global torsor.
It is therefore important to understand
when two cocycles should be regarded as equivalent.

Suppose that, on each region $U_i$,
we change the chosen trivialization
by a function
\[
h_i:U_i\to G.
\]
Then the old transition functions $g_{ij}$
are replaced by new transition functions $g'_{ij}$,
related by the formula
\[
g'_{ij}=h_i\,g_{ij}\,h_j^{-1}.
\]

This is simply the algebraic expression of a change of local coordinates.
The global torsor has not changed;
only the local trivializations used to describe it have changed.

\begin{definition}
Two cocycles $\{g_{ij}\}$ and $\{g'_{ij}\}$
are called \emph{equivalent}
if there exist functions $h_i$ on the regions $U_i$
such that
\[
g'_{ij}=h_i\,g_{ij}\,h_j^{-1}
\]
on each overlap $U_i\cap U_j$.
\end{definition}

Equivalent cocycles describe the same torsor.
Thus a torsor is determined not by a single cocycle,
but by an equivalence class of cocycles.

\begin{remark}
This is another place where the central theme should be kept visible.
A torsor is not tied to one preferred trivialization,
and therefore not to one preferred cocycle.
What is intrinsic is the global glued object itself,
not any one local coordinate description of it.
\end{remark}

\subsection{The trivial cocycle and global triviality}

The simplest cocycle is the trivial one,
for which
\[
g_{ij}=e
\]
on every overlap.
In this case, all local trivializations agree directly,
so the local copies of $G$ glue together
without any twisting.
The resulting torsor is globally trivial.

Conversely, if a torsor admits a global trivialization,
then one may choose local trivializations compatible with it,
and the corresponding cocycle becomes trivial.

Thus the trivial cocycle corresponds exactly
to the case in which the torsor has a global origin,
or equivalently,
a global identification with the acting group.

\begin{remark}
This point is worth stating clearly.
The twisting of a torsor is measured by the failure
of its cocycle to be trivial.
So the absence of a global basepoint
is encoded algebraically by nontrivial transition data.
This is one of the cleanest ways to see
that the lack of a canonical origin
is not merely a missing decoration,
but a genuine structural feature.
\end{remark}

\subsection{Why reconstruction matters}

At first sight,
the cocycle description may seem like a technical reformulation
of what was already known.
But it is much more than that.
It is the form in which torsors naturally enter
cohomology, descent theory, and sheaf theory.

What makes the cocycle viewpoint powerful is that it separates
two different ingredients:
\begin{enumerate}
\item local triviality, which makes the object manageable locally;
\item compatibility of transition data, which governs its global structure.
\end{enumerate}
This separation is exactly what is needed
whenever one studies an object that is locally simple
but globally twisted.

For the present notes,
this matters because the later sheaf-theoretic sections
will reinterpret torsors in precisely this local-to-global fashion.
There, local sections and transition data
will play the roles that local trivializations and cocycles
already play here.

\subsection{Summary}

Let us summarize the main content of this section.

\begin{enumerate}
\item A locally trivial torsor determines transition functions on overlaps.
\item These transition functions satisfy cocycle identities.
\item A compatible cocycle can be used to reconstruct a torsor by gluing.
\item Different choices of local trivializations produce equivalent cocycles.
\item A torsor is globally trivial exactly when its cocycle is trivial.
\end{enumerate}

This makes precise the guiding idea of Sections~8 and~9:
a torsor is an object that is locally a copy of the acting group
but globally assembled by gluing.
Its twisting is measured by cocycle data.

The next section extends this picture
from the elementary setting to torsors under sheaves of groups,
where the distinction between local sections and global sections
becomes especially important.

\section{Sheaf Torsors, Local Sections, and Global Sections}

The preceding sections developed the torsor idea
in a deliberately elementary form.
A torsor was first introduced as a set with a free and transitive group action,
and then reinterpreted as a locally trivial object
assembled from compatible gluing data.
We now move to the version that is most relevant
for local-to-global mathematics:
torsors under sheaves of groups.

This step is conceptually important.
In the set-theoretic case,
a torsor is either trivialized by a chosen point or it is not.
In the sheaf-theoretic case,
one may have local sections and local trivializations
without any global section at all.
This distinction between local existence and global existence
is exactly what makes sheaf torsors so useful,
and it is one of the main reasons
they will later be relevant for the intended applications.

\begin{remark}
The central theme of the paper should be kept in mind here.
A torsor is not only a free and transitive action object;
it is also a locally trivial object
whose global structure is governed by gluing.
In the sheaf-theoretic setting,
this local/global contrast becomes especially sharp,
because sections may exist locally without existing globally.
\end{remark}

\subsection{Sheaves of groups}

Let $X$ be a topological space.
A \emph{sheaf of groups} on $X$
is a sheaf $\mathcal{G}$ such that
for every open set $U\subseteq X$,
the set $\mathcal{G}(U)$ is a group,
and the restriction maps are group homomorphisms.

Thus a sheaf of groups assigns to each open set
a group of local symmetries,
in a way that is compatible with restriction to smaller open sets
and with gluing of local sections.
This is the appropriate setting
when the acting symmetry itself varies locally.

\begin{remark}
In the elementary theory,
there was a single global group $G$ acting everywhere.
For sheaf torsors,
the relevant symmetry over an open set $U$
is the group $\mathcal{G}(U)$ of sections of the sheaf on $U$.
So the acting symmetry is now itself local in nature.
\end{remark}

\subsection{Actions of sheaves of groups}

Let $\mathcal{F}$ be a sheaf of sets on $X$,
and let $\mathcal{G}$ be a sheaf of groups on $X$.
A \emph{left action} of $\mathcal{G}$ on $\mathcal{F}$
means that for each open set $U\subseteq X$,
the group $\mathcal{G}(U)$ acts on the set $\mathcal{F}(U)$,
and these actions are compatible with restriction maps.

Concretely, this means that if
\[
g\in \mathcal{G}(U),\qquad s\in \mathcal{F}(U),
\]
then one has an element
\[
g\cdot s\in \mathcal{F}(U),
\]
and if $V\subseteq U$ is open,
then restricting first and acting later
gives the same result as acting first and restricting later.

Thus a sheaf action is simply an action defined uniformly
over all open sets and compatible with localization.

\subsection{Definition of a sheaf torsor}

We now arrive at the sheaf-theoretic analogue of the elementary notion.

\begin{definition}
Let $\mathcal{G}$ be a sheaf of groups on a topological space $X$.
A \emph{$\mathcal{G}$-torsor} is a sheaf of sets $\mathcal{F}$ on $X$
equipped with an action of $\mathcal{G}$
such that:
\begin{enumerate}
\item $\mathcal{F}$ is \emph{locally nonempty}, that is,
there exists an open covering $\{U_i\}$ of $X$
such that $\mathcal{F}(U_i)\neq\varnothing$ for all $i$;
\item for every open set $U\subseteq X$ and every two sections
\[
s,t\in \mathcal{F}(U),
\]
there exists an open covering $\{V_\alpha\}$ of $U$
such that on each $V_\alpha$
there is a unique element
\[
g_\alpha\in \mathcal{G}(V_\alpha)
\]
with
\[
g_\alpha\cdot s|_{V_\alpha}=t|_{V_\alpha}.
\]
\end{enumerate}
\end{definition}

This definition expresses exactly the same torsor idea as before,
but now in a local form.
Sections need not be globally comparable by a single global group element;
instead, they are locally comparable in a unique way.

\begin{remark}
This is the sheaf-theoretic form of the principle
“a torsor is locally trivial.”
The comparison of two sections need not be globally available,
but it must be available locally and uniquely.
So free transitivity is now expressed in a localized way.
\end{remark}

\subsection{Local sections and local triviality}

Let $\mathcal{F}$ be a $\mathcal{G}$-torsor.
By definition, $\mathcal{F}$ is locally nonempty.
So for each point of $X$,
there is some neighborhood $U$
on which one can choose a section
\[
s\in \mathcal{F}(U).
\]

Such a local section plays the role of a local basepoint.
Once it is chosen,
every other local section over the same open set
can be described uniquely by acting on $s$
with a section of the sheaf of groups.

More precisely,
if $s\in \mathcal{F}(U)$ is fixed,
then the map
\[
\mathcal{G}(U)\to \mathcal{F}(U),\qquad g\mapsto g\cdot s,
\]
behaves like a local trivialization:
it identifies the torsor locally with the sheaf of groups.

\begin{remark}
This is the exact sheaf-theoretic analogue
of choosing a basepoint in an ordinary torsor.
A local section trivializes the torsor locally.
But, in general, there need not exist any single global section
that trivializes it everywhere at once.
That is precisely where the gluing problem enters.
\end{remark}

So a sheaf torsor is, by its very definition,
a locally trivial object.
The point is not merely that local sections exist,
but that once such a section is chosen,
the whole torsor over that region becomes indistinguishable
from the sheaf of groups acting on it.

\subsection{Global sections and global triviality}

Figure~\ref{fig:local-vs-global-sections} illustrates the basic local/global distinction:
a sheaf torsor may admit local sections on a covering
without possessing any single global section.

\begin{figure}[H]
\centering
\begin{tikzpicture}[>=Latex, baseline=(current bounding box.center)]
  \draw[thick] (0,0) -- (8,0);
  \node[below] at (4,0) {$X$};

  \draw[thick] (0.6,0.35) -- (3.0,0.35);
  \draw[thick] (2.4,0.35) -- (5.4,0.35);
  \draw[thick] (4.8,0.35) -- (7.4,0.35);

  \node[above] at (1.8,0.35) {$U_1$};
  \node[above] at (3.9,0.35) {$U_2$};
  \node[above] at (6.1,0.35) {$U_3$};

  \node at (1.8,1.1) {$s_1$};
  \node at (3.9,1.1) {$s_2$};
  \node at (6.1,1.1) {$s_3$};

  \draw[->, thick] (1.8,0.95) -- (1.8,0.45);
  \draw[->, thick] (3.9,0.95) -- (3.9,0.45);
  \draw[->, thick] (6.1,0.95) -- (6.1,0.45);

  \node at (4,-1.0) {local sections exist, but no single global section is chosen};
\end{tikzpicture}
\caption{A sheaf torsor may admit local sections on a covering
without admitting a single global section.}
\label{fig:local-vs-global-sections}
\end{figure}

The role of global sections is now clear.
If a sheaf torsor $\mathcal{F}$ admits a global section
\[
s\in \mathcal{F}(X),
\]
then this section serves as a global basepoint.
In that case,
the action of $\mathcal{G}(X)$ on $s$
gives a global trivialization of the torsor.

Thus the existence of a global section means
that the torsor is globally trivial.
Conversely, the failure of a global section
signals that the torsor remains globally twisted,
even though it is locally trivial everywhere.

\begin{remark}
This is one of the most important conceptual points in the paper.
For a sheaf torsor, the difference between local sections and global sections
is the difference between local triviality and global triviality.
The absence of a global section is therefore not merely an omission;
it is the precise mathematical expression
of the failure of local choices to glue into one global choice.
\end{remark}

This is the local/global contrast in its clearest form.
A sheaf torsor may be trivial over every member of a covering
and yet fail to be trivial globally.
Its twisting is therefore not seen locally;
it appears only in the attempt to glue all local trivializations together.

\subsection{Transition data revisited}

Suppose now that $\mathcal{F}$ is a $\mathcal{G}$-torsor
and that we choose local sections
\[
s_i\in \mathcal{F}(U_i)
\]
on an open covering $\{U_i\}$ of $X$.

On an overlap $U_i\cap U_j$,
the sections $s_i$ and $s_j$ both belong to
\[
\mathcal{F}(U_i\cap U_j).
\]
Since $\mathcal{F}$ is a torsor,
there is locally a unique element of $\mathcal{G}$
that transforms one into the other.
Thus one obtains transition data
\[
g_{ij}\in \mathcal{G}(U_i\cap U_j)
\]
such that
\[
g_{ij}\cdot s_j=s_i
\]
on the overlap.

These transition sections satisfy the same cocycle condition
as in the previous section.
Indeed, on triple overlaps,
the uniqueness of transport implies
\[
g_{ij}g_{jk}=g_{ik}.
\]

Thus the whole cocycle picture reappears naturally
inside the sheaf-theoretic setting.
The only difference is that the transition data now live
in a sheaf of groups rather than in one fixed global group.

\begin{remark}
At this point the general philosophy becomes especially transparent.
A sheaf torsor is locally trivial because local sections exist.
It is globally glued because those local sections differ by transition data.
The obstruction to a global section is exactly the failure
of those local trivializations to be globally compatible in a trivial way.
\end{remark}

\subsection{Why local sections matter}

It is worth emphasizing that local sections do more than merely witness
the nonemptiness of the sheaf torsor.
They are the local origins from which the torsor is described.
Without them,
there would be no local trivialization and no transition cocycle.

So in the sheaf-theoretic context,
a section is not just an element of a set.
It is a local choice of reference point.
The existence of such local choices is what makes torsor theory possible,
while the failure of those choices to globalize
is what makes torsor theory interesting.

This observation is one of the main bridges
to the intended applications.
Later, certain protocol-dependent objects
will be interpreted in such a way
that local realizability corresponds to local sections,
while global coherence becomes a more delicate issue.
For that reason,
the reader should become fully comfortable with the idea
that local sections are local basepoints,
whereas global sections are global trivializations.

\subsection{The elementary case as a special case}

It is useful to see that the ordinary torsor theory of sets
fits naturally into this picture.
If one takes the underlying space to be a single point,
then a sheaf of groups is just a group,
a sheaf of sets is just a set,
and a sheaf torsor is exactly an ordinary torsor.

Thus the sheaf-theoretic notion is not a different concept,
but a genuine extension of the elementary one.
The ordinary theory is the special case
in which there is no nontrivial distinction
between local and global data.

\begin{remark}
This is why the sheaf-theoretic version is so natural.
It does not replace the elementary notion of torsor;
it explains what that notion becomes
when one enters a genuinely local-to-global setting.
\end{remark}

\subsection{Summary}

Let us summarize the main points of this section.

\begin{enumerate}
\item A sheaf of groups provides locally varying symmetry.
\item A sheaf torsor is a sheaf of sets with a locally free and transitive action.
\item Local sections play the role of local basepoints.
\item Local sections yield local trivializations.
\item A global section yields a global trivialization.
\item The absence of a global section expresses the failure
of local choices to glue into one global choice.
\item Transition data between local sections satisfy cocycle identities.
\end{enumerate}

This is the form of torsor theory most directly relevant
for later applications.
The next section offers a brief optional glimpse
of how this picture fits naturally into sheaf theory and topos theory
at a more conceptual level.

\section{A Sheaf- and Topos-Theoretic Glimpse}

This section is logically optional for the most elementary reading of the paper,
but it is strongly recommended for readers
who wish to understand later sheaf-theoretic applications in full.
Its purpose is not to develop topos theory systematically,
but to explain why the topos of sheaves provides
the natural ambient universe
for the torsor constructions that motivate these notes.

The main message is simple.
Once one has understood that a torsor is
a locally trivial object assembled from compatible local data,
it becomes natural to ask for a mathematical setting
in which local data, restriction, gluing, and internal symmetry
are all treated uniformly.
Sheaf theory already points in this direction,
and a Grothendieck topos may be regarded
as the natural environment in which this local-to-global logic
is internalized.

\begin{remark}
For the intended later applications,
the topos is not merely a convenient language.
It is the mathematical universe in which local protocol data,
internal symmetries, and global coherence are expressed at the same level.
Thus the phrase ``torsor in a topos''
should be understood not as an abstract embellishment,
but as the natural continuation of the sheaf-theoretic viewpoint.
\end{remark}

\subsection{Why sheaves are not yet the whole story}

In the previous sections,
we repeatedly used the same pattern:
one has local trivializations on a covering,
one compares them on overlaps,
and one asks whether they glue to a global object.
This is already the basic logic of sheaf theory.

A sheaf organizes local data by means of restriction and gluing.
But when one begins to study not only sheaves of sets,
but also internal algebraic structures,
actions, and torsors,
it is useful to pass from an individual sheaf
to the whole category in which such objects live.
That category is a topos.

Thus the topos viewpoint is not a departure from sheaf theory.
It is what one obtains when one decides
to treat sheaf-theoretic local-to-global mathematics
as an ambient universe in its own right.

\begin{remark}
The transition from sheaves to a topos
is analogous to the transition from a single set
to the whole category of sets.
Once one wants to speak not only about particular objects,
but also about groups, actions, quotients, torsors,
and internal constructions among them,
it is natural to work in the whole ambient category.
\end{remark}

\subsection{The topos of sheaves}

Let $(C,J)$ be a site.
The category
\[
\mathbf{Sh}(C,J)
\]
of sheaves on $(C,J)$
is a Grothendieck topos.
For the purposes of these notes,
the reader may think of this as a generalized space of varying local data.
Its objects are sheaves,
its morphisms are morphisms of sheaves,
and its internal logic is adapted to restriction and gluing.

One should keep in mind that
the site $(C,J)$ encodes which families of morphisms are regarded as coverings,
and therefore which notions of locality and compatibility are relevant.
Thus the topos of sheaves is not merely a passive container.
It encodes the local geometry of the problem.

In later applications,
one works not with sheaves on an ordinary topological space,
but with sheaves on a site naturally associated with a protocol.
The resulting topos is then the ambient universe
in which local transcript data, simulation structure,
and torsor-like behavior are organized.

\begin{remark}
This is one of the key reasons for mentioning topoi explicitly.
The topos $\mathbf{Sh}(C,J)$ tells us what ``local''
and ``compatible''
mean for the problem at hand.
So when a later theorem refers to
the topos of sheaves on $(C_\Pi,J_\Pi)$,
the site is not incidental:
it specifies the relevant notion of locality for the protocol $\Pi$.
\end{remark}

Figure~\ref{fig:topos-group-torsor} gives a schematic picture
of the intended ambient setting:
inside the topos $\mathbf{Sh}(C,J)$,
one considers a group object $G$ acting on an object $F$,
and asks whether $F$ admits global sections.

\begin{figure}[H]
\centering
\begin{tikzpicture}[>=Latex, baseline=(current bounding box.center)]
  \draw[thick] (0,0) rectangle (8,4);
  \node[above] at (4,4) {$\mathbf{Sh}(C,J)$};

  \node[draw, rounded corners, minimum width=1.5cm, minimum height=0.8cm] (G) at (2.2,2.4) {$G$};
  \node[draw, rounded corners, minimum width=1.5cm, minimum height=0.8cm] (F) at (5.8,2.0) {$F$};

  \draw[->, thick] (3.0,2.3) -- node[above] {action} (5.0,2.1);

  \node at (2.2,1.4) {group object};
  \node at (5.8,1.0) {torsor object};

  \node[draw, rounded corners, minimum width=2.0cm, minimum height=0.8cm] (GS) at (11.0,2.0) {$\Gamma(F)$};
  \draw[->, thick] (8,2.0) -- node[above] {global sections} (9.2,2.0);
\end{tikzpicture}
\caption{Inside the topos $\mathbf{Sh}(C,J)$,
one may consider a group object $G$, an object $F$ on which $G$ acts,
and the question whether $F$ admits global sections.}
\label{fig:topos-group-torsor}
\end{figure}
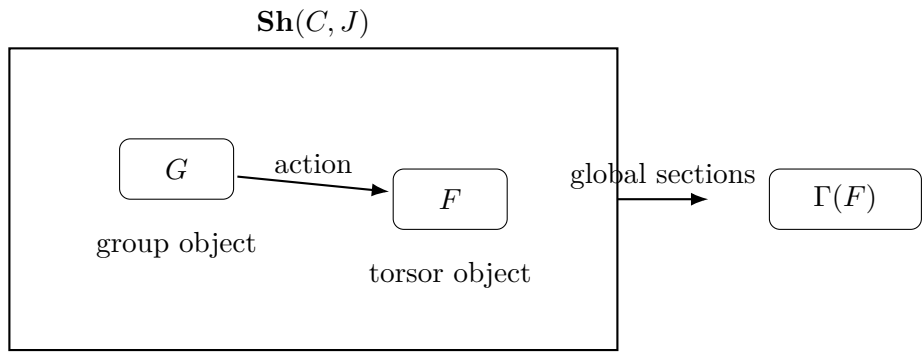

\subsection{Group objects inside a topos}

Inside the ordinary category of sets,
a group is a set with multiplication, identity, and inverse
satisfying the usual axioms.
Inside a topos,
one may define a \emph{group object} in exactly the same formal way:
an object $G$ together with multiplication, identity, and inverse morphisms
satisfying the group axioms internally.

The point is that the definition is the same,
but its meaning is internal to the topos.
Thus one should think of a group object
as a symmetry object living inside the sheaf universe.

In the special case of a sheaf topos,
group objects correspond to sheaves of groups.
So the sheaf-theoretic discussion of the previous section
is already the concrete form of the internal notion.

\begin{remark}
This is an important conceptual simplification.
A group object in a topos is not an exotic new kind of group.
It is the natural internal version of the same idea.
For sheaf topoi, this internal notion matches the familiar external notion
of a sheaf of groups.
\end{remark}

\subsection{Actions and torsors inside a topos}

Once group objects are available,
one may also speak of actions inside the topos.
If $G$ is a group object and $F$ is another object,
an action of $G$ on $F$ is given internally
in exactly the same formal style as in the category of sets.

This leads to the notion of a torsor object.
Informally speaking,
a torsor in a topos is an object
which is locally nonempty
and on which the group object acts
in a locally free and transitive way.
Externally, when the topos is a sheaf topos,
this recovers the notion of a torsor under a sheaf of groups.

So the phrase
``$F$ is a torsor under $G$ in the topos $\mathbf{Sh}(C,J)$''
should be read as follows:
within the universe of sheaves on $(C,J)$,
the object $F$ behaves like a space with symmetry
but without a chosen origin,
and it is locally indistinguishable from the symmetry object $G$.

\begin{remark}
This is one of the main interpretive goals of the present section.
A statement about torsors in a topos
should sound like a natural internalization
of the elementary theory,
not like an unrelated abstract construction.
The formal setting changes,
but the structural idea remains the same:
local triviality, transport, and possible failure of global trivialization.
\end{remark}

\subsection{The global sections functor}

To understand later applications,
one more notion is especially important:
the global sections functor.

If $\mathcal{E}$ is a topos,
one may consider the set of global points or global sections of an object $F$.
In a sheaf topos $\mathbf{Sh}(C,J)$,
this corresponds to taking a section over the terminal object,
that is, a globally defined coherent choice.

Thus, when one says that a torsor object has a global section,
one means that there is a globally coherent basepoint
defined across the whole sheaf universe.
Such a section trivializes the torsor globally.

Conversely, the absence of a global section means
that no single global coherent choice exists,
even though local sections may exist on a covering.
This is the topos-theoretic form
of the local/global distinction emphasized earlier.

\begin{remark}
For the later theorem, this point is crucial.
A global section is not merely ``an element somewhere.''
It is a coherent global choice across the entire site.
The distinction between local sections and global sections
is therefore exactly the distinction between
local realizability and global coherent trivialization.
\end{remark}

\subsection{Internal existence versus local existence}

One of the most useful lessons of the topos viewpoint
is that existence becomes sensitive to locality.
In ordinary set theory,
to say that an object is nonempty
is simply to say that it has an element.
In a sheaf-theoretic or topos-theoretic setting,
an object may be inhabited locally
without admitting any global point.

For torsors this is decisive.
A torsor in a topos may be locally trivial,
because it has local sections on a covering,
while still lacking a global section.
So the torsor is not globally trivial,
even though every local piece looks trivial.

This is exactly the structural phenomenon
that earlier sections described in terms of gluing and cocycles.
The topos viewpoint does not change that phenomenon.
It clarifies it by placing it inside a universe
where local and global existence are formally distinguished.

\begin{remark}
This is one of the deepest conceptual reasons
the topos language is useful here.
It allows one to treat
``there are compatible local realizations''
and
``there is a single global coherent realization''
as mathematically distinct statements.
That distinction will later be essential.
\end{remark}

\subsection{Descent and reconstruction}

The topos-theoretic viewpoint also explains
why torsors belong naturally to the mathematics of descent.

Descent is the principle that
a global object may be reconstructed
from local objects together with compatible gluing data.
But this is exactly how torsors have been described in these notes:
local copies of the acting group,
compared on overlaps by transition data satisfying cocycle conditions.

Thus torsors are among the basic descent objects.
Their cocycle descriptions are descent data,
and their reconstruction from compatible cocycles
is a descent construction.
A topos is the natural setting for this logic,
because it is built to handle local data and gluing systematically.

\begin{remark}
One may therefore summarize the situation as follows:
torsors are natural not only in group theory,
but in the mathematics of descent.
Their local triviality, transition functions, and possible global twisting
are all instances of the general principle
that global structure may be encoded by compatible local data.
\end{remark}

\subsection{Why this is strongly relevant for the later theorem}

We may now say more clearly
why this section is strongly relevant
for the full understanding of the later theorem.

The later theorem does not merely speak about
a set with a group action.
It speaks about:
\begin{enumerate}
\item a site $(C_\Pi,J_\Pi)$ associated with a protocol $\Pi$;
\item the topos $\mathbf{Sh}(C_\Pi,J_\Pi)$ of sheaves on that site;
\item a group object or sheaf of groups $G_\Pi$ in that topos;
\item an object $F_\Pi$ carrying a torsor structure under $G_\Pi$;
\item local triviality and the possible absence of global sections.
\end{enumerate}

Without the present viewpoint,
such a statement may look formally elaborate.
With the present viewpoint,
it should appear as the natural synthesis
of all the themes developed earlier:
locality, gluing, transport, cocycles,
local sections, and global obstruction.

\begin{remark}
This is why the present section,
though logically optional,
is mathematically close to indispensable
for readers who want to understand the later application in depth.
It supplies the ambient interpretation
in which every part of the later theorem becomes conceptually natural.
\end{remark}

\subsection{A heuristic dictionary}

For the reader's convenience,
let us record the main conceptual correspondences once more.

\begin{align*}
\text{ordinary group} &\longrightarrow \text{group object in a topos},\\
\text{ordinary torsor} &\longrightarrow \text{torsor object in a topos},\\
\text{chosen basepoint} &\longrightarrow \text{global section},\\
\text{local basepoint} &\longrightarrow \text{local section},\\
\text{change of trivialization} &\longrightarrow \text{transition cocycle},\\
\text{global triviality} &\longrightarrow \text{existence of a global section},\\
\text{global twisting} &\longrightarrow \text{absence of global section despite local triviality}.
\end{align*}

This dictionary is not a substitute for formal definitions,
but it is the right conceptual guide.

\subsection{Summary}

Let us summarize the main content of this section.

\begin{enumerate}
\item A sheaf topos $\mathbf{Sh}(C,J)$ is the natural universe of local data and gluing.
\item Group objects and torsor objects may be defined internally in that universe.
\item In a sheaf topos, sheaves of groups and sheaf torsors are the concrete external forms of these internal notions.
\item Global sections represent globally coherent choices.
\item Local sections may exist without any global section.
\item This local/global distinction is exactly what makes torsors useful in the later applications.
\item Statements about torsors in a topos should therefore be read
as natural internal versions of the elementary torsor theory developed earlier.
\end{enumerate}

The next section returns to the main line of the paper
and explains more directly
how this torsor language prepares the ground
for later conceptual applications to $\Sigma$-protocols in cryptography.
\section{Toward Applications to $\Sigma$-Protocols}

We now indicate, at a conceptual level,
why the torsor viewpoint developed in the preceding sections
is relevant for later work on $\Sigma$-protocols.
The purpose of the present section is not to give a full formalization
of protocol theory in torsor language.
Rather, it is to explain why such a formalization is natural,
and why the mathematical ideas developed earlier
provide the appropriate preparation for understanding it.

\begin{remark}
The reader should keep in mind that this section is primarily motivational.
Its aim is to show how the language of torsors,
local triviality, cocycles, and local/global sections
can be used to organize certain protocol-theoretic phenomena.
The full technical details belong to later work.
\end{remark}

\subsection{From transport to protocol structure}

At the most elementary level,
a torsor is a space in which relative position is meaningful
even though no preferred origin has been chosen.
This already suggests a useful analogy with protocol-theoretic situations
in which one is given structured data
that can be locally compared, transformed, or transported,
but not necessarily reduced to one globally fixed normal form.

In the case of a $\Sigma$-protocol,
one is concerned with transcripts, witnesses, simulation behavior,
and compatibility conditions among pieces of local data.
The torsor viewpoint suggests that such data
should sometimes be regarded not as isolated algebraic tokens,
but as points in a space organized by transport under a suitable action.
From this perspective,
the relevant mathematical question is not merely
what the underlying algebraic object is,
but whether its points admit local reference choices,
how these choices compare,
and whether they glue globally.

\begin{remark}
This is the first place where the earlier emphasis on transport becomes useful.
What mattered in torsor theory was never only the existence of points,
but the controlled way in which one point is related to another.
That same logic becomes suggestive in protocol settings
where one studies the structured relation among local realizations.
\end{remark}

\subsection{Protocol-dependent sheaves}

The sheaf-theoretic viewpoint suggests the following general pattern.
To a protocol $\Pi$,
one may attempt to associate a sheaf-like object
whose sections over a region of contexts
represent locally realizable transcript data
or locally coherent protocol behavior.
The precise meaning of the “regions” depends on the application:
in later work they are encoded by a site or Grothendieck topology
adapted to the protocol.

What matters conceptually is that one no longer studies
the protocol only through a single global set of transcripts.
Instead, one studies a varying family of local transcript spaces,
together with restriction maps and gluing conditions.
In this way, protocol structure is organized sheaf-theoretically,
and one can begin to ask whether the resulting object
behaves like a torsor under an appropriate sheaf of groups.

\begin{remark}
This is one of the main conceptual bridges to the later theorem.
The shift is from “a protocol has certain transcripts”
to “a protocol determines a sheaf of local transcript data.”
Once that shift has been made,
the torsor language becomes available.
\end{remark}

\subsection{Local triviality and simulation}

One of the most natural correspondences suggested by this framework
concerns local triviality.
In torsor theory, local triviality means that over each sufficiently small region
the object admits a local section,
hence a local basepoint,
and therefore becomes identified with the acting group.

In the protocol-theoretic setting,
the analogous role is played by simulation data.
Very roughly speaking,
a simulator provides, over a suitable local context,
a distinguished way of presenting transcript data
without passing through a single globally fixed witness.
This makes it natural to view simulation
as supplying the analogue of local trivialization.

More explicitly,
one may think of a simulator as producing,
for each admissible local context,
a locally coherent transcript description.
Such a description need not amount to a global witness-based parametrization
of all transcript data at once.
But it does provide a \emph{local reference presentation}
relative to which the relevant structure becomes transparent.
In this sense,
the simulator plays the role of a local trivialization:
it gives a local coordinate system
for the protocol-dependent object,
even when no single global coordinate choice is available.

Figure~\ref{fig:simulation-local-trivialization}
illustrates this analogy schematically.
Each local context $U_i$ carries a local simulated description $s_i$;
these local descriptions play the role of local sections
or local trivializations.
What matters is not that one has already chosen
a single global witness-based origin,
but that one has locally coherent reference data.

\begin{figure}[H]
\centering
\begin{tikzpicture}[>=Latex, baseline=(current bounding box.center)]
  \draw[thick] (0,0) -- (8,0);
  \node[below] at (4,0) {space of local contexts};

  \draw[thick] (0.6,0.35) -- (3.0,0.35);
  \draw[thick] (2.4,0.35) -- (5.4,0.35);
  \draw[thick] (4.8,0.35) -- (7.4,0.35);

  \node[above] at (1.8,0.35) {$U_1$};
  \node[above] at (3.9,0.35) {$U_2$};
  \node[above] at (6.1,0.35) {$U_3$};

  \node at (1.8,1.15) {$s_1$};
  \node at (3.9,1.15) {$s_2$};
  \node at (6.1,1.15) {$s_3$};

  \draw[->, thick] (1.8,1.0) -- (1.8,0.45);
  \draw[->, thick] (3.9,1.0) -- (3.9,0.45);
  \draw[->, thick] (6.1,1.0) -- (6.1,0.45);

  \node at (4,-1.0) {local simulated transcript data act like local trivializations};
\end{tikzpicture}
\caption{Schematic picture of the analogy:
a simulator provides local reference descriptions
that play the role of local trivializations.}
\label{fig:simulation-local-trivialization}
\end{figure}
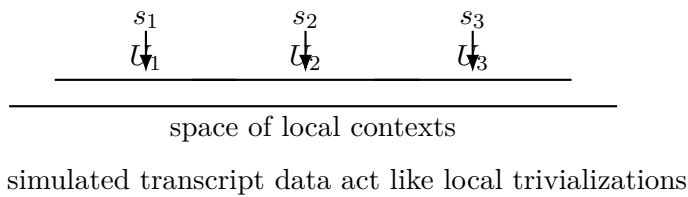

From this point of view,
honest-verifier zero-knowledge suggests local triviality:
the protocol admits locally coherent simulated realizations
which serve as local coordinate systems
for the associated sheaf-like object.
The torsor picture then says that these local coordinate systems
need not arise from a single globally chosen reference point;
they may exist only locally.

\begin{remark}
The analogy should be read structurally rather than literally.
A simulator is not, in any naive set-theoretic sense,
itself simply a section of a sheaf.
Rather, it provides the kind of local reference mechanism
that makes local triviality conceptually plausible.
What the torsor language captures
is the pattern of local reference data,
local transport,
and possible failure of global coherent trivialization.
\end{remark}

\begin{remark}
This is precisely why local sections were emphasized so strongly earlier.
A local section is not merely a local element;
it is a local origin.
In the intended application,
simulation plays an analogous role:
it provides local reference data
relative to which the relevant transcript structure becomes intelligible,
even when no global witness-based trivialization is present.
\end{remark}
\subsection{Why sheaf torsors are the right level}

At this stage, it should be clear
why ordinary set-theoretic torsors would not be enough.
The relevant protocol-theoretic phenomena
are not purely global.
They involve local realizability,
restriction to subcontexts,
compatibility on overlaps,
and possible failure of global coherence.
These are precisely the features for which sheaf torsors are designed.

A sheaf torsor allows one to express:
\begin{enumerate}
\item local existence of sections,
\item unique local transport under a locally varying symmetry,
\item transition data on overlaps,
\item possible failure of global sections.
\end{enumerate}
This is exactly the pattern one expects
when simulation and security are being compared
in a local-to-global framework.

\begin{remark}
The passage from ordinary torsors to sheaf torsors
is therefore not a technical luxury.
It is forced by the structure of the intended application.
Once the protocol is organized sheaf-theoretically,
the natural torsor notion is automatically the sheaf-theoretic one.
\end{remark}

\subsection{Toward torsors in a topos of sheaves}

The optional topos-theoretic section was included
precisely to make the next step feel natural.
If a protocol determines a site $(C_\Pi,J_\Pi)$,
then one may work not merely with isolated sheaves,
but in the entire topos
\[
\mathbf{Sh}(C_\Pi,J_\Pi).
\]
Within that topos,
one may then consider:
\begin{itemize}
\item an object $F_\Pi$ associated with the protocol,
\item a group object or sheaf of groups $G_\Pi$ acting on $F_\Pi$,
\item the possibility that $F_\Pi$ is a torsor under $G_\Pi$.
\end{itemize}

At that point,
a statement such as
“$F_\Pi$ is a torsor under $G_\Pi$ in the topos of sheaves on $(C_\Pi,J_\Pi)$”
should no longer sound abrupt.
It is simply the sheaf-theoretic and topos-theoretic continuation
of the elementary theory developed in this note:
local triviality, transport, gluing, and possible global obstruction.

\begin{remark}
This is one of the main pedagogical goals of the paper.
A reader who has followed the earlier sections
should now be able to hear the phrase
“torsor in the topos of sheaves”
as a natural generalization of
“group action without a chosen origin,”
rather than as an unrelated abstract construction.
\end{remark}

\subsection{A heuristic dictionary}

For later use,
it is helpful to summarize the conceptual correspondences
suggested by the torsor viewpoint:

\begin{center}
\begin{tabular}{ll}
ordinary torsor theory & protocol-theoretic heuristic \\[4pt]
chosen basepoint & chosen local reference data \\[4pt]
local section & local realizability / local simulation datum \\[4pt]
global section & global coherent trivialization \\[4pt]
transition cocycle & compatibility data among local descriptions \\[4pt]
nontrivial torsor & globally twisted but locally trivial structure
\end{tabular}
\end{center}

This dictionary is intentionally heuristic,
but it captures the reason the torsor formalism is useful.
It organizes the relation between
local realizability and global coherence
in a way that ordinary pointwise language does not.

\subsection{Preparation for the later theorem}

The later theorem that motivates these notes
may be read schematically as follows:
under suitable hypotheses,
a protocol-dependent sheaf-like object $F_\Pi$
carries a torsor structure under a corresponding symmetry object $G_\Pi$.
Its local triviality is related to simulation behavior,
while the failure of suitable global sections
reflects structural security constraints.

The present note does not prove that theorem.
What it aims to do is make such a theorem intelligible.
After reading the present paper,
the reader should understand why each part of that statement is natural:
\begin{enumerate}
\item why one associates sheaf-like objects to local protocol data;
\item why local transport suggests torsor structure;
\item why local triviality corresponds to simulation-like behavior;
\item why the existence or nonexistence of global sections matters.
\end{enumerate}

If this conceptual preparation has been successful,
then the later theorem will not look like an arbitrary formal trick.
It will appear instead as a natural synthesis
of the basic mathematics of torsors,
local triviality, cocycles, sheaves, and topos-theoretic gluing.

\subsection{Summary}

Let us summarize the purpose of the present section.

\begin{enumerate}
\item Torsor language is useful for organizing protocol-dependent local data.
\item A sheaf-theoretic protocol object may admit local trivializations
suggested by simulation.
\item Global sections correspond to globally coherent trivializations.
\item The failure of such global coherence
may reflect structural security constraints.
\item This makes torsors in sheaf topoi
a natural language for later work on $\Sigma$-protocols.
\end{enumerate}

The next section concludes the paper
by summarizing the mathematical path we have followed:
from group actions and affine spaces
to gluing, sheaf torsors, and the conceptual preparation
for later applications in cryptography.

\section{Conclusion}

The main purpose of these notes has been to provide
a preparatory introduction to torsors
with a view toward later applications to $\Sigma$-protocols in cryptography.
Rather than attempting a comprehensive survey
of all appearances of torsors in mathematics,
we have focused on a more specific conceptual path:
from group actions and affine spaces
to local triviality, gluing data, sheaf torsors,
and the contrast between local and global sections.

The elementary starting point was simple.
A torsor is a nonempty set equipped with a free and transitive group action.
Equivalently, it is a space in which every point can be transported
to every other point by a unique group element.
From this perspective,
torsors may be viewed as spaces with symmetry
but without a chosen origin.
This already explains why affine spaces provide the basic model:
differences are meaningful,
while absolute positions require an auxiliary choice.

We then emphasized that this elementary definition,
though fundamental, does not exhaust the meaning of the concept.
A torsor is not merely an object defined by a free and transitive action.
It is also an object that is locally trivial
and may be globally nontrivial because it is assembled from local pieces
by gluing data.
This second viewpoint is essential.
It explains why torsors appear naturally in geometry,
descent, and local-to-global mathematics,
and it shows that the lack of a preferred basepoint
should be understood positively:
it reflects the failure of local trivializations
to combine into one canonical global trivialization.

From there,
the cocycle description made the gluing picture more precise.
A torsor determines transition data on overlaps,
these data satisfy cocycle identities,
and conversely a compatible cocycle may be used
to reconstruct the torsor.
In this way,
torsors become among the simplest and clearest examples
of global objects encoded by local comparison data.

The sheaf-theoretic sections then extended this picture
to the context most relevant for later work.
A torsor under a sheaf of groups is locally trivial by local sections,
but need not admit any global section.
Thus the distinction between local existence and global existence,
already implicit in the gluing viewpoint,
becomes mathematically explicit.
The optional topos-theoretic section was included
to indicate that this is not an accidental feature of sheaves alone,
but part of a more general local-to-global logic
internal to a topos.

Finally, we sketched why this language is useful
for thinking about $\Sigma$-protocols.
The point was not to replace the usual algebraic theory of such protocols,
but to prepare a conceptual framework
in which protocol-dependent objects may be organized sheaf-theoretically,
local triviality may be related to simulation-like behavior,
and the failure of appropriate global sections
may reflect structural security constraints.
In that setting,
the statement that a protocol-dependent object is a torsor
in a topos of sheaves should no longer seem abrupt;
it is the natural continuation
of the mathematics developed in these notes.

If the exposition has achieved its aim,
the reader should now see torsors in two complementary ways.
On the one hand,
a torsor is a space of unique transport under a group action.
On the other hand,
it is a locally trivial object whose global form is determined by gluing.
Together, these two viewpoints form the mathematical background
for the later sheaf-theoretic and cryptographic applications
that motivate the present work.

\bigskip

\noindent Takao Inou\'{e}

\noindent Faculty of Informatics

\noindent Yamato University

\noindent Katayama-cho 2-5-1, Suita, Osaka, 564-0082, Japan

\noindent inoue.takao@yamato-u.ac.jp
 
\noindent (Personal) takaoapple@gmail.com (I prefer my personal mail)
\bigskip

\appendix

\section{Solutions to the Exercises in Section 7}

This appendix contains detailed solutions to the exercises in Section~7.
The purpose is not merely to record correct answers,
but to illustrate how the basic definitions are actually used.
In particular, the reader should pay attention to the repeated role of
the definitions of free action, transitive action, torsor,
and transporter.
These notions are simple, but they govern everything that follows.

\begin{enumerate}

\item
\textbf{Exercise.}
Let $G$ act on a set $X$.
Show that the action is free if and only if
the stabilizer of every point is trivial.

\medskip

\textbf{Solution.}
Recall the definitions.

The action is \emph{free} if, for every $x\in X$ and every $g\in G$,
\[
g\cdot x=x \Longrightarrow g=e.
\]
Also, the stabilizer of a point $x\in X$ is
\[
G_x:=\{g\in G\mid g\cdot x=x\}.
\]

Assume first that the action is free.
Fix $x\in X$.
If $g\in G_x$, then by definition $g\cdot x=x$.
Since the action is free, this implies $g=e$.
Thus $G_x=\{e\}$.

Conversely, assume that $G_x=\{e\}$ for every $x\in X$.
Suppose that $g\cdot x=x$ for some $g\in G$ and $x\in X$.
Then $g\in G_x$, so $g=e$.
Hence the action is free.

So the action is free exactly when every stabilizer is trivial.

\medskip

\textbf{Comment.}
This is a basic example of unpacking two definitions
and checking that they say the same thing in different language.
In practice, freeness is often verified by computing stabilizers.

\item
\textbf{Exercise.}
Let $G$ act on a set $X$.
Show that the action is transitive if and only if
$X$ consists of a single orbit.

\medskip

\textbf{Solution.}
Recall that the action is \emph{transitive} if,
for every $x,y\in X$, there exists $g\in G$ such that
\[
g\cdot x=y.
\]
Also, the orbit of $x$ is
\[
G\cdot x=\{g\cdot x\mid g\in G\}.
\]

Assume first that the action is transitive.
Fix any $x\in X$.
For every $y\in X$, transitivity gives some $g\in G$ such that
\[
g\cdot x=y.
\]
Hence $y\in G\cdot x$.
So every point of $X$ lies in the orbit of $x$, and therefore
\[
X=G\cdot x.
\]
Thus $X$ consists of a single orbit.

Conversely, suppose that $X$ consists of a single orbit.
Then there exists some $x\in X$ such that
\[
X=G\cdot x.
\]
Now let $y,z\in X$.
Since $y$ and $z$ lie in the same orbit, there are elements $g,h\in G$ such that
\[
y=g\cdot x,\qquad z=h\cdot x.
\]
Then
\[
z=h\cdot x=h g^{-1}\cdot (g\cdot x)=h g^{-1}\cdot y.
\]
So $z$ is obtained from $y$ by the action of an element of $G$.
Hence the action is transitive.

\medskip

\textbf{Comment.}
Again, this is a matter of translating between two forms of the same idea:
transitivity means that all points communicate with one another,
and “one orbit” expresses exactly that.

\item
\textbf{Exercise.}
Give an example of an action that is transitive but not free.

\medskip

\textbf{Solution.}
Let a group $G$ act on the set of left cosets $G/H$,
where $H\leq G$ is a nontrivial subgroup.
The action is given by left multiplication:
\[
g\cdot (aH):=(ga)H.
\]

This action is transitive:
given any two cosets $aH$ and $bH$, the element $ba^{-1}\in G$ satisfies
\[
(ba^{-1})\cdot (aH)=bH.
\]

However, the action is not free if $H\neq \{e\}$.
Indeed, the stabilizer of the coset $H$ is exactly $H$ itself.
Since $H$ is nontrivial, the stabilizer is nontrivial,
so the action is not free.

\medskip

\textbf{Comment.}
This example is important because coset actions are one of the standard ways
to see transitivity without freeness.

\item
\textbf{Exercise.}
Give an example of an action that is free but not transitive.

\medskip

\textbf{Solution.}
Let $G=\mathbb{Z}$ act on $X=\mathbb{Z}\sqcup \mathbb{Z}$,
the disjoint union of two copies of $\mathbb{Z}$,
by translation in each copy:
\[
n\cdot (m,\varepsilon):=(n+m,\varepsilon),
\]
where $\varepsilon\in\{0,1\}$ indicates which copy we are in.

This action is free:
if
\[
n\cdot (m,\varepsilon)=(m,\varepsilon),
\]
then
\[
(n+m,\varepsilon)=(m,\varepsilon),
\]
so $n=0$.

But it is not transitive,
because points in one copy of $\mathbb{Z}$ cannot be moved
into the other copy.
Thus there are at least two distinct orbits.

\medskip

\textbf{Comment.}
This example shows that freeness is a local condition at each point,
whereas transitivity is a global condition on how the whole set hangs together.

\item
\textbf{Exercise.}
Let $G$ act freely and transitively on a nonempty set $X$.
Prove directly that for every $x,y\in X$
there exists a unique $g\in G$ such that
\[
g\cdot x=y.
\]

\medskip

\textbf{Solution.}
We use both parts of the assumption.

Because the action is transitive,
for any given $x,y\in X$ there exists at least one $g\in G$ such that
\[
g\cdot x=y.
\]

To prove uniqueness, suppose that both $g,h\in G$ satisfy
\[
g\cdot x=y,\qquad h\cdot x=y.
\]
Then
\[
h^{-1}g\cdot x=h^{-1}\cdot (g\cdot x)=h^{-1}\cdot y=x.
\]
Since the action is free, the only element fixing $x$ is the identity.
Therefore
\[
h^{-1}g=e,
\]
hence $g=h$.

So there exists a unique such $g$.

\medskip

\textbf{Comment.}
This exercise is the key calculation behind torsor theory.
Transitivity gives existence of transport,
and freeness gives uniqueness of transport.

\item
\textbf{Exercise.}
Let $X$ be a $G$-torsor and let $x_0\in X$.
Prove that the map
\[
\phi_{x_0}:G\to X,\qquad g\mapsto g\cdot x_0
\]
is a bijection.

\medskip

\textbf{Solution.}
Since $X$ is a torsor, the action is free and transitive.

To prove surjectivity,
let $x\in X$.
By transitivity, there exists $g\in G$ such that
\[
g\cdot x_0=x.
\]
So every point of $X$ lies in the image of $\phi_{x_0}$.

To prove injectivity,
suppose
\[
\phi_{x_0}(g)=\phi_{x_0}(h).
\]
Then
\[
g\cdot x_0=h\cdot x_0.
\]
Applying $h^{-1}$ to both sides,
\[
h^{-1}g\cdot x_0=x_0.
\]
Since the action is free, $h^{-1}g=e$, so $g=h$.

Therefore $\phi_{x_0}$ is a bijection.

\medskip

\textbf{Comment.}
This is the precise mathematical content of the slogan
“a torsor becomes a copy of the group once a basepoint is chosen.”

\item
\textbf{Exercise.}
Let $X$ be a $G$-torsor and let $x_0,x_1\in X$.
If $h\in G$ is the unique element such that
\[
h\cdot x_0=x_1,
\]
show that
\[
\phi_{x_1}(g)=\phi_{x_0}(gh)
\]
for all $g\in G$.

\medskip

\textbf{Solution.}
By definition,
\[
\phi_{x_1}(g)=g\cdot x_1.
\]
Since $x_1=h\cdot x_0$, this becomes
\[
\phi_{x_1}(g)=g\cdot (h\cdot x_0)=(gh)\cdot x_0.
\]
But the right-hand side is exactly
\[
\phi_{x_0}(gh).
\]
So
\[
\phi_{x_1}(g)=\phi_{x_0}(gh)
\]
for all $g\in G$.

\medskip

\textbf{Comment.}
This shows explicitly how changing the basepoint changes the coordinate description:
the two identifications differ by translation in the group.

\item
\textbf{Exercise.}
Let $X$ be a $G$-torsor.
Prove the transporter identities
\begin{align*}
\operatorname{tr}(x,x)&=e,\\
\operatorname{tr}(x,y)^{-1}&=\operatorname{tr}(y,x),\\
\operatorname{tr}(y,z)\operatorname{tr}(x,y)&=\operatorname{tr}(x,z).
\end{align*}

\medskip

\textbf{Solution.}
By definition, $\operatorname{tr}(x,y)$ is the unique element of $G$ such that
\[
\operatorname{tr}(x,y)\cdot x=y.
\]

For the first identity, both $e$ and $\operatorname{tr}(x,x)$ send $x$ to $x$.
By uniqueness,
\[
\operatorname{tr}(x,x)=e.
\]

For the second identity, let
\[
g=\operatorname{tr}(x,y).
\]
Then $g\cdot x=y$, so
\[
x=g^{-1}\cdot y.
\]
Thus $g^{-1}$ sends $y$ to $x$.
By uniqueness,
\[
g^{-1}=\operatorname{tr}(y,x).
\]
So
\[
\operatorname{tr}(x,y)^{-1}=\operatorname{tr}(y,x).
\]

For the third identity, let
\[
g=\operatorname{tr}(x,y),\qquad h=\operatorname{tr}(y,z).
\]
Then
\[
g\cdot x=y,\qquad h\cdot y=z.
\]
Hence
\[
(hg)\cdot x=h\cdot(g\cdot x)=h\cdot y=z.
\]
So $hg$ sends $x$ to $z$.
By uniqueness,
\[
hg=\operatorname{tr}(x,z).
\]
That is,
\[
\operatorname{tr}(y,z)\operatorname{tr}(x,y)=\operatorname{tr}(x,z).
\]

\medskip

\textbf{Comment.}
These identities show that transporters behave like generalized differences:
identity means no displacement, inverse means reversing direction,
and multiplication means composing successive displacements.

\item
\textbf{Exercise.}
Let $A$ be an affine space modeled on a vector space $V$.
Show that $A$ is a torsor under the additive group $(V,+)$.

\medskip

\textbf{Solution.}
By definition of affine space, there is an action
\[
A\times V\to A,\qquad (a,v)\mapsto a+v,
\]
satisfying:
\begin{enumerate}
\item $a+0=a$,
\item $(a+v)+w=a+(v+w)$,
\item for every $a,b\in A$, there exists a unique $v\in V$ such that
\[
a+v=b.
\]
\end{enumerate}

The first two properties say that $(V,+)$ acts on $A$.
The third says exactly that for every $a,b\in A$,
there exists a unique $v\in V$ sending $a$ to $b$.
That is precisely the condition that the action be free and transitive.

So $A$ is a torsor under the additive group $(V,+)$.

\medskip

\textbf{Comment.}
This is the most important example in the whole note.
It is the basic geometric model for all later torsor ideas.

\item
\textbf{Exercise.}
Let $T:V\to W$ be a linear map of vector spaces
and let
\[
S_w=\{v\in V\mid T(v)=w\}.
\]
Assume that $S_w$ is nonempty.
Show that $S_w$ is an affine space modeled on $\ker(T)$.

\medskip

\textbf{Solution.}
We define an action of $\ker(T)$ on $S_w$ by translation:
for $u\in\ker(T)$ and $v\in S_w$, set
\[
v+u.
\]

First we check that this stays inside $S_w$.
Since $v\in S_w$, we have $T(v)=w$.
Since $u\in\ker(T)$, we have $T(u)=0$.
Therefore
\[
T(v+u)=T(v)+T(u)=w+0=w.
\]
So $v+u\in S_w$.

Now we verify the affine-space axioms.
Clearly
\[
v+0=v,\qquad (v+u_1)+u_2=v+(u_1+u_2).
\]
Finally, let $v_1,v_2\in S_w$.
Then
\[
T(v_2-v_1)=T(v_2)-T(v_1)=w-w=0,
\]
so $v_2-v_1\in\ker(T)$.
Also,
\[
v_1+(v_2-v_1)=v_2.
\]
This element of $\ker(T)$ is unique,
because if
\[
v_1+u=v_2,
\]
then necessarily
\[
u=v_2-v_1.
\]

Thus $S_w$ is an affine space modeled on $\ker(T)$.

\medskip

\textbf{Comment.}
This exercise is a good example of the general principle
“solutions differ by homogeneous solutions.”

\item
\textbf{Exercise.}
Let $G$ be a group and $H\leq G$ a subgroup.
Show that each left coset $gH$ is a right $H$-torsor.

\medskip

\textbf{Solution.}
Define a right action of $H$ on $gH$ by
\[
(gh_1)\cdot h_2:=g(h_1h_2).
\]

First, this is well defined because every element of $gH$ has the form $gh_1$,
and multiplying on the right by an element of $H$ stays inside $gH$.

The action axioms are straightforward:
\[
(gh)\cdot e=gh,\qquad ((gh)\cdot h_1)\cdot h_2=(gh)\cdot (h_1h_2).
\]

To prove transitivity,
take any two elements $gh_1,gh_2\in gH$.
Then
\[
(gh_1)\cdot (h_1^{-1}h_2)=gh_2.
\]
So any point can be moved to any other.

To prove freeness,
suppose
\[
(gh)\cdot h'=gh.
\]
Then
\[
ghh'=gh.
\]
Multiplying on the left by $(gh)^{-1}$ gives
\[
h'=e.
\]
So the action is free.

Hence $gH$ is a right $H$-torsor.

\medskip

\textbf{Comment.}
A coset looks like the subgroup, but it has no preferred identity element.
That is why it is a torsor rather than canonically a group.

\item
\textbf{Exercise.}
Let $V$ be a finite-dimensional vector space.
Show that the set of ordered bases of $V$
is a torsor under $\mathrm{GL}(V)$.

\medskip

\textbf{Solution.}
Let $\mathcal{B}(V)$ be the set of ordered bases of $V$.
Define an action of $\mathrm{GL}(V)$ on $\mathcal{B}(V)$ by
\[
g\cdot (v_1,\dots,v_n):=(gv_1,\dots,gv_n).
\]

This is indeed an action, since the identity map fixes each basis,
and composition of linear maps corresponds to composition of actions.

To prove transitivity,
let
\[
(v_1,\dots,v_n),\qquad (w_1,\dots,w_n)
\]
be two ordered bases.
There is a unique linear automorphism $g\in\mathrm{GL}(V)$ such that
\[
g(v_i)=w_i
\]
for all $i$.
Hence
\[
g\cdot (v_1,\dots,v_n)=(w_1,\dots,w_n).
\]

To prove freeness,
suppose
\[
g\cdot (v_1,\dots,v_n)=(v_1,\dots,v_n).
\]
Then $g(v_i)=v_i$ for every basis vector $v_i$.
Since a linear map is determined by its values on a basis,
we conclude that $g=\mathrm{id}_V$.

So the action is free and transitive,
hence $\mathcal{B}(V)$ is a $\mathrm{GL}(V)$-torsor.

\medskip

\textbf{Comment.}
This is the prototype for frame bundles in geometry.

\item
\textbf{Exercise.}
Let $X$ be a $G$-torsor.
Fix a point $x_0\in X$ and define a binary operation on $X$
by transporting the group law along the bijection
\[
G\to X,\qquad g\mapsto g\cdot x_0.
\]
Show that this makes $X$ into a group.
Then show that the resulting group structure depends on the choice of $x_0$.

\medskip

\textbf{Solution.}
Since $X$ is a $G$-torsor, the map
\[
\phi_{x_0}:G\to X,\qquad g\mapsto g\cdot x_0
\]
is a bijection.

Define a multiplication on $X$ by
\[
x*y:=\phi_{x_0}\bigl(\phi_{x_0}^{-1}(x)\,\phi_{x_0}^{-1}(y)\bigr).
\]
This is the standard transport of structure along a bijection.

Because $\phi_{x_0}$ is a bijection and $G$ is a group,
it follows immediately that $(X,*)$ is a group:
associativity, identity, and inverses are all carried over from $G$.

The identity element of this group structure is
\[
\phi_{x_0}(e)=x_0.
\]
So the chosen basepoint becomes the identity element.

Now choose another basepoint $x_1\in X$.
Repeating the same construction using $x_1$
produces another group law on $X$ whose identity element is $x_1$.
Unless $x_1=x_0$, the two identity elements are different,
so the two group structures are different.

Therefore the group structure depends on the choice of basepoint.

\medskip

\textbf{Comment.}
This is one of the most important conceptual exercises in the note.
It shows that a torsor can be made into a group,
but only after a noncanonical choice.

\item
\textbf{Exercise.}
Explain why the previous exercise does not contradict the statement
that a torsor does not carry a canonical group structure.

\medskip

\textbf{Solution.}
The previous exercise shows only that
\emph{after choosing a basepoint}
one can transport the group structure of $G$ onto $X$.
But the resulting group law depends on that choice.

A canonical group structure would be one determined intrinsically by $X$ itself,
without any extra choice.
That is exactly what a torsor lacks.
A torsor has enough structure to compare points by transport,
but not enough structure to single out one point
as the identity element in a preferred way.

So there is no contradiction:
a torsor may admit many group structures,
one for each chosen basepoint,
without carrying a canonical one.

\item
\textbf{Exercise.}
Let $X$ be a $G$-torsor.
For fixed $x\in X$, define
\[
d_x(y):=\operatorname{tr}(x,y).
\]
Show that $d_x:X\to G$ is a bijection.
Interpret this as a coordinate system determined by the basepoint $x$.

\medskip

\textbf{Solution.}
Fix $x\in X$.
For each $y\in X$, the transporter $\operatorname{tr}(x,y)$ is the unique element of $G$
sending $x$ to $y$.
So the map
\[
d_x:X\to G,\qquad y\mapsto \operatorname{tr}(x,y)
\]
assigns to each point its displacement from the basepoint $x$.

To show surjectivity,
let $g\in G$.
Since the action is defined on $X$, the point
\[
y:=g\cdot x
\]
lies in $X$.
By definition of transporter,
\[
\operatorname{tr}(x,y)=g.
\]
So $d_x$ is surjective.

To show injectivity,
suppose
\[
d_x(y_1)=d_x(y_2).
\]
Then
\[
\operatorname{tr}(x,y_1)=\operatorname{tr}(x,y_2).
\]
Applying both sides to $x$,
we get
\[
y_1=y_2.
\]
So $d_x$ is injective.

Thus $d_x$ is a bijection.

This is a coordinate system determined by the basepoint $x$:
every point is described by its unique displacement from $x$.

\medskip

\textbf{Comment.}
This is the transporter version of the map $\phi_x:G\to X$.
One goes from group elements to points, the other from points to group elements.

\item
\textbf{Exercise.}
In your own words, explain the difference between the following two statements:
\begin{enumerate}
\item a torsor becomes identified with a group after a basepoint is chosen;
\item a torsor is canonically a group.
\end{enumerate}

\medskip

\textbf{Solution.}
The first statement means that,
once one chooses a point of the torsor,
one can use that point as an origin
and thereby identify the torsor with the acting group.
This identification is perfectly valid,
but it depends on the chosen point.

The second statement would mean that
the torsor carries a group structure intrinsically,
without any extra choice,
and that one point is distinguished as the identity in a natural way.

The difference is therefore the difference between
\emph{noncanonical identification after a choice}
and
\emph{intrinsic structure without a choice}.
A torsor has the first property but not, in general, the second.

\item
\textbf{Exercise.}
Why is nonemptiness included in the definition of a torsor?
What would go wrong conceptually if the empty set were allowed?

\medskip

\textbf{Solution.}
A torsor is meant to be a space in which
every pair of points can be compared by unique transport.
If there are no points at all,
this basic intuition disappears.

Formally, if the empty set were allowed,
some conditions could hold vacuously,
but the resulting object would no longer deserve to be called a torsor
in the intended sense.
There would be no basepoints, no transporters, no local coordinates,
and no possibility of comparing points.

This matters even more later in sheaf theory,
where one distinguishes carefully between
local existence of sections and global existence of sections.
The ordinary definition of torsor starts with a genuinely inhabited space.

\item
\textbf{Exercise.}
Write a short paragraph explaining why affine spaces
provide the best first model for the general theory of torsors.

\medskip

\textbf{Solution.}
Affine spaces provide the best first model for torsors
because they express the central idea in the most familiar setting:
one can compare points by differences,
but there is no canonically distinguished origin.
The acting group is simply the additive group of the underlying vector space,
so the torsor structure is easy to visualize as translation.
At the same time, affine spaces already display
the key noncanonical feature of torsors:
choosing a point identifies the affine space with the vector space,
but no such choice is intrinsic.
Thus affine spaces give a concrete geometric prototype
for the general notion of symmetry without a chosen origin.

\end{enumerate}

\section{A Suggested Lecture Plan}

This appendix proposes one possible way
to use the present notes for a short lecture course.
The plan is designed for classes of approximately $90$ minutes each.
It is only a suggestion:
depending on the audience,
one may compress the material into fewer meetings
or expand it into a slower course with more exercises.

A natural standard format is a course of nine lectures.
The first half develops the elementary theory of torsors,
while the second half moves toward local triviality,
gluing, sheaf torsors, and the conceptual bridge
to later applications involving $\Sigma$-protocols.

\subsection{A nine-lecture plan}

\begin{enumerate}
\item \textbf{Introduction; Group Actions and Orbits.}\\
Introduce the general aim of the notes.
Review group actions, orbits, and stabilizers,
with emphasis on the idea of transport between points.

\item \textbf{Free and Transitive Actions; Definition of a Torsor.}\\
Explain why free and transitive actions are the key structure.
Introduce torsors formally and discuss the slogan
“a group without a chosen identity.”

\item \textbf{Basic Examples I: Affine Spaces.}\\
Develop affine spaces as the guiding example.
Emphasize the distinction between points and differences,
and explain why this viewpoint is useful in geometry and analysis.

\item \textbf{Basic Examples II; Basepoints, Differences, and Transporters.}\\
Discuss solution sets of linear equations, cosets,
and sets of bases.
Then explain basepoints, non-canonical identifications,
and transporter identities.

\item \textbf{Exercises on the Basic Theory.}\\
Work through selected exercises from Section~7.
At this stage it is especially useful
to emphasize how the definitions are used in practice.
The solutions in the appendix may be used as supporting material.

\item \textbf{Local Triviality and Gluing Data.}\\
Introduce the second major viewpoint on torsors:
they are locally trivial objects
that may be globally nontrivial.
Explain transition data and the basic gluing philosophy.

\item \textbf{Cocycle Descriptions and Reconstruction.}\\
Develop transition functions, cocycle identities,
equivalence of cocycles,
and the reconstruction of torsors from cocycle data.

\item \textbf{Sheaf Torsors, Local Sections, and Global Sections.}\\
Move to torsors under sheaves of groups.
Emphasize the distinction between local sections and global sections,
and explain why global triviality is stronger than local triviality.

\item \textbf{Topos-Theoretic Glimpse; Toward $\Sigma$-Protocols.}\\
Present the optional topos-theoretic perspective,
especially for stronger students.
Then explain how the torsor viewpoint prepares the ground
for later applications to $\Sigma$-protocols in cryptography.
Conclude with a summary of the whole course.
\end{enumerate}

\subsection{Possible variations}

For a shorter course,
Lectures 3 and 4 may be combined,
and the optional topos-theoretic material may be reduced.
This yields a compressed seven-lecture format.

For a slower or more advanced course,
Lecture 5 may be expanded into a full problem session,
and the sheaf- and topos-theoretic sections
may be treated in greater detail.
In that case,
the material can easily support ten or more lectures.

\subsection{Pedagogical remark}

A central pedagogical aim of the present notes
is that the reader should not merely learn the definitions,
but also see how those definitions are used.
For that reason,
the exercises and their detailed solutions
are an essential part of the design of the course.
They are intended to help students move
from formal familiarity with the definitions
to actual working understanding.

\subsection{A possible continuation: six lectures on the author's later paper}

The present notes were written not only as a self-contained introduction,
but also as preparatory background for later work of the author.
For that reason,
a natural use of the material is the following fifteen-lecture format:
nine lectures based on the present notes,
followed by six lectures devoted to the later paper
\cite{Inoue2026}.

In this format,
the first nine lectures provide the mathematical language
needed for the later theory:
group actions, torsors, affine spaces, local triviality,
gluing data, cocycles, sheaf torsors, and the basic topos-theoretic viewpoint.
The remaining six lectures then explain how these ideas
are used in the sheaf-theoretic interpretation
of $\Sigma$-protocols.

\subsection{A fifteen-lecture plan}

\paragraph{Lectures 1--9.}
These are the nine preparatory lectures described above.

\paragraph{Lecture 10: Motivation and global overview of the later paper.}
Explain the general purpose of the later work.
Why should protocol-theoretic data be organized sheaf-theoretically?
Why is locality important?
What is gained by passing from ordinary transcript sets
to sheaves on a site?
This lecture should help students see
that the later paper is not an unrelated abstraction,
but a natural continuation of the material developed in the first nine lectures.

\paragraph{Lecture 11: The site associated with the protocol.}
Introduce the site $(C_\Pi,J_\Pi)$ attached to a protocol $\Pi$.
Explain the meaning of its objects and morphisms,
and especially the role of coverings.
The main pedagogical aim is to clarify
what ``locality'' means in the protocol-theoretic setting,
and why a Grothendieck topology is the right formal tool
for encoding it.

\paragraph{Lecture 12: The protocol-dependent sheaf.}
Present the construction of the sheaf-like object $F_\Pi$.
Explain what its sections represent,
how restriction maps are interpreted,
and why sheaf-theoretic gluing is relevant.
This lecture should make clear that
one is no longer studying isolated transcripts,
but a system of local protocol data organized by compatibility.

\paragraph{Lecture 13: The symmetry object and the action.}
Introduce the corresponding group object or sheaf of groups $G_\Pi$,
and explain how it acts on $F_\Pi$.
At this point,
students should be encouraged to connect the formalism
with the earlier theory of ordinary torsors and sheaf torsors.
The main goal is to make the torsor claim
look structurally natural rather than surprising.

\paragraph{Lecture 14: The torsor theorem and proof strategy.}
State and explain the main torsor theorem of the later paper.
Discuss the role of local triviality,
the relation to simulation,
and the significance of the absence of suitable global sections.
If the proof is too long for a single lecture,
one may present a proof sketch and isolate the key ideas:
local sections as local trivializations,
transport under the symmetry object,
and global obstruction as a structural feature.

\paragraph{Lecture 15: Structural interpretation and further directions.}
Explain the resulting interpretation of security properties,
including the relation between local triviality
and honest-verifier zero-knowledge,
as well as the relation between global obstruction
and soundness-type phenomena.
Conclude by discussing possible extensions,
such as more general protocols,
richer topologies,
or broader uses of sheaf- and topos-theoretic methods in cryptography.

\subsection{Pedagogical value of the fifteen-lecture format}

This fifteen-lecture format has a clear conceptual advantage.
The first nine lectures provide a purely mathematical foundation
in which the students become familiar with the torsor idea
in increasingly sophisticated forms:
from affine spaces and group actions
to gluing data, cocycles, sheaf torsors, and topoi.
The last six lectures then show
that these notions are not merely formal generalities,
but tools that can actually organize cryptographic structure.

In this way,
the later paper may be taught not as an isolated advanced text,
but as the natural application of a well-prepared mathematical background.
This makes the conceptual content of the later theorem
far easier to understand.

\subsection{A practical remark for instructors}

For a mathematically mature audience,
the fifteen-lecture course may be taught almost exactly as written.
For a less experienced audience,
it may be helpful to slow down Lectures 8--11,
since the transition from elementary torsors
to sheaf torsors and then to sheaves on a site
is the point at which the abstraction level rises most sharply.

It is also useful, in the second half of the course,
to refer back explicitly to the first half.
For example:
\begin{itemize}
\item local sections in the later paper should be compared
with local basepoints in sheaf torsors;
\item protocol-dependent transition data should be compared
with the cocycles of Sections~8 and~9;
\item the absence of suitable global sections should be compared
with the failure of global triviality for a torsor.
\end{itemize}
Such reminders help students see
that the later theory is genuinely built from the earlier one.

\subsection{Final pedagogical remark}

The overall teaching strategy behind this course is therefore cumulative.
The first nine lectures train the reader
to understand torsors as spaces of transport,
as locally trivial objects,
and as globally glued structures.
The final six lectures then show
how that mathematical language may be used
to reinterpret protocol-theoretic notions
in a sheaf-theoretic and topos-theoretic framework.
This is, in fact, one of the main intended uses of the present notes.

\section{Glossary of Basic Terms}

This glossary is intended only as a quick guide to the main terms
used throughout these notes.
The explanations are informal and are meant
to recall the role of each notion in the present exposition.

\begin{description}

\item[Affine space.]
A nonempty set modeled on a vector space,
in which one can add a vector to a point
and subtract two points to obtain a vector,
but in which no origin is canonically distinguished.
In these notes, affine spaces are the guiding example of torsors.

\item[Basepoint.]
A chosen point of a torsor.
Once a basepoint is chosen,
the torsor becomes identified with the acting group,
but only non-canonically.

\item[Cocycle.]
A compatible system of transition data on overlaps.
For torsors, cocycles record how local trivializations differ
and how they glue to form a global object.

\item[Descent.]
The general principle that a global object may be reconstructed
from local objects together with compatible gluing data.
Torsors are among the basic examples of descent objects.

\item[Free action.]
A group action in which no non-identity group element fixes a point.
Equivalently, every stabilizer is trivial.

\item[Global section.]
A globally defined section of a sheaf or sheaf-like object.
For a sheaf torsor, a global section gives a global trivialization.

\item[Gluing data.]
The data specifying how local trivial pieces are identified on overlaps.
For torsors, this is expressed by transition elements or cocycles.

\item[Group action.]
An action of a group $G$ on a set or object $X$,
describing how elements of $G$ move points of $X$.

\item[Group object.]
The internal version of a group inside a category such as a topos.
In a sheaf topos, group objects correspond to sheaves of groups.

\item[Local section.]
A section defined only on a local region.
For a sheaf torsor, a local section plays the role of a local basepoint.

\item[Local triviality.]
The property that an object becomes trivial after restricting to sufficiently small regions.
For torsors, this means that locally they look like the acting group.

\item[Orbit.]
The set of points that can be reached from a given point
under a group action.

\item[Principal homogeneous space.]
Another name for a torsor.
This terminology emphasizes that every point looks the same under the action
and that there is no preferred origin.

\item[Sheaf of groups.]
A sheaf whose sections over each open set form a group,
with restriction maps given by group homomorphisms.

\item[Sheaf torsor.]
A torsor under a sheaf of groups.
It is locally nonempty and locally free and transitive,
but may fail to have a global section.

\item[Stabilizer.]
For a point $x$ under a group action,
the subgroup consisting of all elements that fix $x$.

\item[Topos.]
A mathematical universe of generalized spaces or local data,
such as a category of sheaves on a site.
In these notes, topoi provide the natural ambient setting
for sheaf torsors and later applications.

\item[Torsor.]
A nonempty object with a free and transitive action of a group,
or more generally of a sheaf of groups.
A torsor may be understood as a space with symmetry
but without a chosen origin.

\item[Transitive action.]
A group action in which every point can be moved to every other point.
Equivalently, the action has a single orbit.

\item[Transporter.]
For two points $x,y$ in a torsor,
the unique group element that sends $x$ to $y$.
It plays the role of a generalized difference.

\end{description}

\end{document}